\newtheorem{theorem}{Theorem}
\newtheorem{definition}[theorem]{Definition}
\newtheorem{example}[theorem]{Example}
\newtheorem{lemma}[theorem]{Lemma}
\newtheorem{proposition}[theorem]{Proposition}
\newtheorem{remark}[theorem]{Remark}
\newenvironment{proof}[1][Proof]{\noindent\textbf{#1.} }{\ \rule{0.5em}{0.5em}}
\begin{document}

\title{\textbf{Hyperbolic Valued Metric Space}}
\author{Chinmay Ghosh$^{1}$, Anirban Bandyopadhyay$^{2}$, Soumen Mondal$^{3}$%
\qquad \\
$^{1}$Department of Mathematics\\
Kazi Nazrul University\\
Nazrul Road, P.O.- Kalla C.H.\\
Asansol-713340, West Bengal, India \\
chinmayarp@gmail.com\\
$^{2}$53, Gopalpur Primary School\\
Raninagar-I, Murshidabad\\
Pin-742304\\
West Bengal, India \\
anirbanbanerjee010@gmail.com \\
$^{3}$28, Dolua Dakshinpara Haridas Primary School\\
Beldanga, Murshidabad\\
Pin-742133\\
West Bengal, India\\
mondalsoumen79@gmail.com}
\date{}
\maketitle

\begin{abstract}
In this paper our main aim is to develop some basic properties of hyperbolic
valued metric spaces. We also establish the hyperbolic version of Banach
contraction principle. Further we construct a hyperbolic valued metric on
the space of all hyperbolic valued continuous functions and prove some
results.

\textbf{AMS Subject Classification }(2010) : 30G35, 46S99.

\textbf{Keywords and phrases}: Bicomplex number, hyperbolic number,
hyperbolic$\mathbb{\ }$valued metric, Banach's contraction principle.
\end{abstract}

\section{\protect\bigskip Introductions and Preliminaries}

\bigskip Corrado Segre \cite{seg} introduced bicomplex numbers in 1892. The
set of bicomplex numbers \cite{alp}, \cite{sai} is defined as 
\begin{equation*}
\mathbb{B%
\mathbb{C}
}=\{w=w_{1}+\mathbf{j}w_{2}:w_{1},w_{2}\in 
\mathbb{C}
(\mathbf{i})\},
\end{equation*}%
where $\mathbf{i}$ and $\mathbf{j}$ are imaginary units such that $\mathbf{ij%
}=\mathbf{ji},~\mathbf{i}^{2}=\mathbf{j}^{2}=-1$ and $\mathbb{%
\mathbb{C}
}(\mathbf{i})$ is the set of complex numbers with imaginary unit $\mathbf{i}%
. $ Also $\mathbb{%
\mathbb{C}
}(\mathbf{j})$ is the set of complex numbers with imaginary unit\textbf{\ }$%
\mathbf{j}.$ Throughout this paper we take $%
\mathbb{R}
,\mathbb{%
\mathbb{C}
}$ to be fields of real and complex numbers respectively and $\mathbb{N}$ to
be the set of natural numbers. Note that basically the three sets $\mathbb{%
\mathbb{C}
},\mathbb{%
\mathbb{C}
}(\mathbf{i}),\mathbb{%
\mathbb{C}
}(\mathbf{j})$ are same. As $\mathbb{%
\mathbb{C}
}$ consists of two copies of $%
\mathbb{R}
,$ called the real and imaginary line, $\mathbb{B%
\mathbb{C}
}$ also consists of two copies of $\mathbb{%
\mathbb{C}
}$, called $\mathbb{%
\mathbb{C}
}(\mathbf{i})$ and $\mathbb{%
\mathbb{C}
}(\mathbf{j}).$ The set $\mathbb{B%
\mathbb{C}
}$ forms a commutative ring with unity and with zero divisors under the
usual addition and multiplication of bicomplex numbers. The product of
imaginary units $\mathbf{i}$ and $\mathbf{j}$ defines a hyperbolic unit $%
\mathbf{k}$ such that $\mathbf{k}^{2}=1.$ The three units satisfy%
\begin{equation*}
\mathbf{ij}=\mathbf{ji}=\mathbf{k},~\mathbf{ik}=\mathbf{ki}=-\mathbf{j}\text{
and }\mathbf{jk}=\mathbf{kj}=-\mathbf{i}.
\end{equation*}

Along with two complex planes $\mathbb{%
\mathbb{C}
}(\mathbf{i})$ and $\mathbb{%
\mathbb{C}
}(\mathbf{j})$, $\mathbb{B%
\mathbb{C}
}$ contains a third interesting plane called the hyperbolic plane. The
elements of the hyperbolic plane are the bicomplex numbers like $\mathbb{%
\alpha =}a_{1}+\mathbf{k}a_{2}$ where $a_{1},a_{2}\in \mathbb{%
\mathbb{R}
}$. The set of hyperbolic numbers is denoted by $\mathbb{D}$ i.e.,%
\begin{equation*}
\mathbb{D=\{\alpha =}a_{1}+\mathbf{k}a_{2}:a_{1},a_{2}\in \mathbb{%
\mathbb{R}
\}}.
\end{equation*}%
Note that $\mathbb{D}$\ is a commutative ring with unity and with zero
divisors under the usual addition and multiplication of $\mathbb{B%
\mathbb{C}
}$.

The two zero divisors 
\begin{equation*}
\mathbf{e}_{1}=\frac{1+\mathbf{k}}{2}\text{ and }\mathbf{e}_{2}=\frac{1-%
\mathbf{k}}{2}
\end{equation*}%
form an idempotent basis in $\mathbb{B%
\mathbb{C}
}.$

If $\mathbb{\alpha =}a_{1}+\mathbf{k}a_{2}\in \mathbb{D},$ then the
idempotent representation of $\mathbb{\alpha }$ is%
\begin{equation*}
\mathbb{\alpha =(}a_{1}+a_{2})\mathbf{e}_{1}+\mathbb{(}a_{1}-a_{2})\mathbf{e}%
_{2}.
\end{equation*}

Sum and the product of two hyperbolic numbers can be defined pointwise with
the idempotent basis.

There exists a bijection between the Euclidean plane $%
\mathbb{R}
^{2}$ and $\mathbb{D},$ in which every hyperbolic number $\mathbb{\alpha =}%
\alpha _{1}\mathbf{e}_{1}+\alpha _{2}\mathbf{e}_{2}$ is mapped to $(\alpha
_{1},\alpha _{2})\in 
\mathbb{R}
^{2}$ and vice versa.

Also we have the two projection maps $p_{1},p_{2}:\mathbb{D\rightarrow }%
\mathbb{R}
,$ given by%
\begin{equation*}
p_{i}(\mathbb{\alpha })=\alpha _{i},\text{ where }\mathbb{\alpha =}\alpha
_{1}\mathbf{e}_{1}+\alpha _{2}\mathbf{e}_{2},~i=1,2.
\end{equation*}

We say that a hyperbolic number $\mathbb{\alpha =}\alpha _{1}\mathbf{e}%
_{1}+\alpha _{2}\mathbf{e}_{2}\in \mathbb{D}$ is a positive hyperbolic
number if $\alpha _{1},\alpha _{2}>0.$ Thus the set of positive hyperbolic
numbers $\mathbb{D}^{+}$ is given by%
\begin{equation*}
\mathbb{D}^{+}\mathbb{=\{\alpha =}\alpha _{1}\mathbf{e}_{1}+\alpha _{2}%
\mathbf{e}_{2}:\alpha _{1},\alpha _{2}>0\mathbb{\}},
\end{equation*}%
and we denote the set of all zero divisors in $\mathbb{D}$ by $\mathbb{O}.$
Set $\mathbb{O}_{0}=\mathbb{O}\cup \{0\}.$\ And we use the notation $\mathbb{%
D}_{0}^{+}=\mathbb{D}^{+}\cup \mathbb{O}_{0}.$

For $\alpha ,\beta \in \mathbb{D},~$define~a relation $\preceq $ on \bigskip 
$\mathbb{D}$~\cite{ku}$\ $by $\alpha \preceq \beta $ whenever $\beta -\alpha
\in \mathbb{D}_{0}^{+}.$ This relation is reflexive, anti-symmetric as well
as transitive and hence defines a partial order on $\mathbb{D}.$ If we write
the hyperbolic numbers $\alpha ,\beta $ in idempotent representation as $%
\mathbb{\alpha =}\alpha _{1}\mathbf{e}_{1}+\alpha _{2}\mathbf{e}_{2}$ and $%
\mathbb{\beta =\beta }_{1}\mathbf{e}_{1}+\mathbb{\beta }_{2}\mathbf{e}_{2},$
then $\alpha \preceq \beta $ implies that $\alpha _{1}\leq \beta _{1}$ and $%
\alpha _{2}\leq \beta _{2}.$ And by $\alpha \prec \beta $ we mean $\alpha
_{1}<\beta _{1}$ and $\alpha _{2}<\beta _{2}.$

For $A\subset \mathbb{D},$ define \cite{tell},%
\begin{eqnarray*}
A_{\mathbf{e}_{1}} &=&\{x\in 
\mathbb{R}
:\exists y\in 
\mathbb{R}
\text{ such that }x\mathbf{e}_{1}+y\mathbf{e}_{2}\in A\}, \\
A_{\mathbf{e}_{2}} &=&\{y\in 
\mathbb{R}
:\exists x\in 
\mathbb{R}
\text{ such that }x\mathbf{e}_{1}+y\mathbf{e}_{2}\in A\}.
\end{eqnarray*}%
and we will consider the supremum and infimum of $A$ defined as follows%
\begin{eqnarray*}
\sup A &=&\sup A_{\mathbf{e}_{1}}\mathbf{e}_{1}+\sup A_{\mathbf{e}_{2}}%
\mathbf{e}_{2}, \\
\inf A &=&\inf A_{\mathbf{e}_{1}}\mathbf{e}_{1}+\inf A_{\mathbf{e}_{2}}%
\mathbf{e}_{2}.
\end{eqnarray*}

\bigskip Moreover, any bicomplex number $w=w_{1}+jw_{2}$ can also be written
as 
\begin{equation*}
w=z_{1}\mathbf{e}_{1}+z_{2}\mathbf{e}_{2}\text{ where }z_{1}=w_{1}-\mathbf{i}%
w_{2},~z_{2}=w_{1}+\mathbf{i}w_{2}\in 
\mathbb{C}
(\mathbf{i}).
\end{equation*}

The hyperbolic modulus of a bicomplex number is defined by%
\begin{equation*}
\left\vert w\right\vert _{k}=\left\vert z_{1}\mathbf{e}_{1}+z_{2}\mathbf{e}%
_{2}\right\vert _{k}=\left\vert z_{1}\right\vert \mathbf{e}_{1}+\left\vert
z_{2}\right\vert \mathbf{e}_{2}\in \mathbb{D}_{0}^{+}
\end{equation*}%
where $\left\vert .\right\vert $ is the real modulus of a complex number$.$

In this paper our main aim is to develop some basic properties of hyperbolic
valued metric spaces. We also establish the hyperbolic version of Banach
contraction principle. Further we construct a hyperbolic valued metric on
the space of all hyperbolic valued continuous functions and prove some
results.

\section{Basic definitions and lemmas}

In this section we give some basic definitions and lemmas. We rename some
definitions already known.

\begin{definition}[$\protect\cite{ku}$]
Let $X$ be a nonempty set. A function $d_{\mathbb{D}}:X\times
X\longrightarrow \mathbb{D}_{0}^{+}$ satisfying the following:

$(i)$ $d_{\mathbb{D}}(x,y)\succeq 0$ for all $x,y\in X$ and $d_{\mathbb{D}%
}(x,y)=0$ if and only if $x=y,$

$(ii)$ $d_{\mathbb{D}}(x,y)=d_{\mathbb{D}}(y,x)$ for all $x,y\in X,$

$(iii)$ $d_{\mathbb{D}}(x,y)\preceq d_{\mathbb{D}}(x,z)+d_{\mathbb{D}}(z,y)$
for all $x,y,z\in X,$

is defined to be a hyperbolic valued metric or $\mathbb{D}$-metric on $X$
and $(X,d_{\mathbb{D}})$ is called a hyperbolic valued metric space or $%
\mathbb{D}$-metric space.
\end{definition}

Observe that a $\mathbb{D}$-metric $d_{\mathbb{D}}$ on $X$ can be decomposed
as $d_{\mathbb{D}}(x,y)=d_{1}(x,y)\mathbf{e}_{1}+d_{2}(x,y)\mathbf{e}_{2}$
where $d_{1}(x,y)$\textbf{\ }and\textbf{\ }$d_{2}(x,y)$ are two real
metrices on $X.$

\begin{definition}
Let $(X,d_{\mathbb{D}})$ and $(Y,\rho _{\mathbb{D}})$ be two $\mathbb{D}$%
-metric spaces. A $\mathbb{D}$-isometry between $(X,d_{\mathbb{D}})$ to $%
(Y,\rho _{\mathbb{D}})$ is a bijection $i_{\mathbb{D}}:X\longrightarrow Y$
such that $d_{\mathbb{D}}(x,y)=\rho _{\mathbb{D}}(i_{\mathbb{D}}(x),i_{%
\mathbb{D}}(y))$ for all $x,y\in X.$ We say that $(X,d_{\mathbb{D}})$ and $%
(Y,\rho _{\mathbb{D}})$ are $\mathbb{D}$-isometric if there exists an
isometry from $(X,d_{\mathbb{D}})$ to $(Y,\rho _{\mathbb{D}}).$
\end{definition}

We may think that two hyperbolic valued metric spaces as the same if they
are $\mathbb{D}$-isometric. Note that if $i_{\mathbb{D}}$ is a $\mathbb{D}$%
-isometry from $(X,d_{\mathbb{D}})$ and $(Y,\rho _{\mathbb{D}}),$ then the
inverse $i_{\mathbb{D}}^{-1}$ is also a $\mathbb{D}$-isometry from $(Y,\rho
_{\mathbb{D}})$ to $(X,d_{\mathbb{D}}),$ and hence being $\mathbb{D}$%
-isometric is a symmetric relation.

\begin{definition}
Let $(X,d_{\mathbb{D}})$ and $(Y,\rho _{\mathbb{D}})$ be two $\mathbb{D}$%
-metric spaces. An embedding of $(X,d_{\mathbb{D}})$ into $(Y,\rho _{\mathbb{%
D}})$ is an injection $i_{\mathbb{D}}:X\longrightarrow Y$ such that $d_{%
\mathbb{D}}(x,y)=\rho _{\mathbb{D}}(i_{\mathbb{D}}(x),i_{\mathbb{D}}(y))$
for all $x,y\in X.$
\end{definition}

Note that an embedding $i_{\mathbb{D}}$ can be regarded as a $\mathbb{D}$%
-isometry between $X$ and its image $i_{\mathbb{D}}(X).$

\begin{definition}[$\protect\cite{sai}$]
Let $(X,d_{\mathbb{D}})$ be a $\mathbb{D}$-metric space. A sequence $%
\{x_{n}\}$ in $X$ converges to a point $\alpha $ if for every $\epsilon _{%
\mathbb{D}}\in \mathbb{D}^{+}~$there exists an $N\in 
\mathbb{N}
$ such that $d_{\mathbb{D}}(x_{n},\alpha )\prec \epsilon _{\mathbb{D}}$ for
all $n\geq N.$ We write $\lim\limits_{n\rightarrow \infty }x_{n}=\alpha $ or 
$x_{n}\rightarrow \alpha $ and $\{x_{n}\}$ is called $\mathbb{D}$-convergent
sequence.
\end{definition}

The following lemma can be easily proved and can be considered as an
alternative definition of $\mathbb{D}$-convergene:

\begin{lemma}
A sequence $\{x_{n}\}$ in a $\mathbb{D}$-metric space $(X,d_{\mathbb{D}})$
converges to $\alpha $ if and only if $\lim\limits_{n\rightarrow \infty }d_{%
\mathbb{D}}(x_{n},\alpha )=0.$
\end{lemma}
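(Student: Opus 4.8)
The plan is to pass to the idempotent components and reduce the statement to the familiar real-variable fact. Recall that $d_{\mathbb{D}}$ decomposes as $d_{\mathbb{D}}(x,y)=d_{1}(x,y)\mathbf{e}_{1}+d_{2}(x,y)\mathbf{e}_{2}$ with $d_{1},d_{2}$ ordinary metrics on $X$, and that for hyperbolic numbers written in idempotent form one has $\alpha \prec \beta$ precisely when $\alpha _{1}<\beta _{1}$ and $\alpha _{2}<\beta _{2}$. Before starting I would fix the meaning of $\lim_{n\rightarrow \infty }d_{\mathbb{D}}(x_{n},\alpha )=0$: since $\{d_{\mathbb{D}}(x_{n},\alpha )\}$ lies in $\mathbb{D}_{0}^{+}$, I read it componentwise, i.e.\ $d_{1}(x_{n},\alpha )\rightarrow 0$ and $d_{2}(x_{n},\alpha )\rightarrow 0$ in $\mathbb{R}$ (which agrees with the definition of $\mathbb{D}$-convergence applied to the $\mathbb{D}$-metric $|\cdot -\cdot |_{k}$ on $\mathbb{D}$).

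For the forward direction I would assume $x_{n}\rightarrow \alpha $, fix an arbitrary $\epsilon >0$, and feed the scalar tolerance $\epsilon _{\mathbb{D}}:=\epsilon \mathbf{e}_{1}+\epsilon \mathbf{e}_{2}$ (which equals $\epsilon $, since $\mathbf{e}_{1}+\mathbf{e}_{2}=1$) into the definition of convergence. The resulting inequality $d_{\mathbb{D}}(x_{n},\alpha )\prec \epsilon _{\mathbb{D}}$ for $n\geq N$ unpacks, via the componentwise description of $\prec $, to $d_{1}(x_{n},\alpha )<\epsilon $ and $d_{2}(x_{n},\alpha )<\epsilon $; hence both real sequences tend to $0$.

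For the converse I would assume $d_{1}(x_{n},\alpha )\rightarrow 0$ and $d_{2}(x_{n},\alpha )\rightarrow 0$, take an arbitrary $\epsilon _{\mathbb{D}}=\epsilon _{1}\mathbf{e}_{1}+\epsilon _{2}\mathbf{e}_{2}\in \mathbb{D}^{+}$ (so $\epsilon _{1},\epsilon _{2}>0$), choose thresholds $N_{1},N_{2}$ for the two real limits separately, and set $N=\max \{N_{1},N_{2}\}$; then $d_{\mathbb{D}}(x_{n},\alpha )\prec \epsilon _{\mathbb{D}}$ for all $n\geq N$, giving $x_{n}\rightarrow \alpha $.

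The whole argument is essentially bookkeeping; the one point needing care --- and the likely place a reader stumbles --- is the mismatch between the one-parameter family of scalar tolerances $\epsilon $ that suffices for the real limits and the genuinely two-parameter family $\epsilon _{1}\mathbf{e}_{1}+\epsilon _{2}\mathbf{e}_{2}$ of hyperbolic tolerances. Necessity only ever needs the diagonal tolerances $\epsilon \mathbf{e}_{1}+\epsilon \mathbf{e}_{2}$, whereas sufficiency must absorb arbitrary $\epsilon _{1}\neq \epsilon _{2}$, which is exactly what taking the maximum of the two threshold indices accomplishes. I do not anticipate any deeper obstacle.
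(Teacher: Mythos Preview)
Your argument is correct; the paper itself offers no proof of this lemma (it merely remarks that it ``can be easily proved''), and your componentwise reduction via the idempotent decomposition is precisely the routine verification the authors are gesturing at.
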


\begin{definition}
Let $(X,d_{\mathbb{D}})$ and $(Y,\rho _{\mathbb{D}})$ be two $\mathbb{D}$%
-metric spaces. A function $f:X\longrightarrow Y$ is $\mathbb{D}$-continuous
at a point $\alpha \in X$ if for every $\epsilon _{\mathbb{D}}\in \mathbb{D}%
^{+}$ there exists a $\delta _{\mathbb{D}}\in \mathbb{D}^{+}$ such that $%
\rho _{\mathbb{D}}(f(x),f(y))\prec \epsilon _{\mathbb{D}}$ whenever $d_{%
\mathbb{D}}(x,\alpha )\prec \delta _{\mathbb{D}}.$ The function $f$ is
called $\mathbb{D}$-continuous on $X$ if it is $\mathbb{D}$-continuous at
all points in $X.$
\end{definition}

The following lemma is easy to prove:

\begin{lemma}
Let $(X,d_{\mathbb{D}})$ and $(Y,\rho _{\mathbb{D}})$ be two $\mathbb{D}$%
-metric spaces and $f:X\longrightarrow Y$. Then the following are equivalent:

$i)$ $f$ is $\mathbb{D}$-continuous at a point $\alpha \in X.$

$ii)$ For every sequence $\{x_{n}\}$ in the $\mathbb{D}$-metric space $(X,d_{%
\mathbb{D}})$ converging to $\alpha \in X,~$the sequence $\{f\left(
x_{n}\right) \}$ in the $\mathbb{D}$-metric space $(Y,\rho _{\mathbb{D}})$
converges to $f(\alpha ).$
\end{lemma}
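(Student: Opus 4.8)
The plan is to prove the two implications separately, following the classical sequential-continuity argument but keeping an eye on the partial order $\preceq$, which is the only place where the hyperbolic setting differs from the real one. I would assume throughout the (evident) reading of Definition~7 that $f$ is $\mathbb{D}$-continuous at $\alpha$ when for every $\epsilon_{\mathbb{D}}\in\mathbb{D}^{+}$ there is a $\delta_{\mathbb{D}}\in\mathbb{D}^{+}$ with $\rho_{\mathbb{D}}(f(x),f(\alpha))\prec\epsilon_{\mathbb{D}}$ whenever $d_{\mathbb{D}}(x,\alpha)\prec\delta_{\mathbb{D}}$.

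For $(i)\Rightarrow(ii)$: suppose $f$ is $\mathbb{D}$-continuous at $\alpha$ and let $\{x_{n}\}$ converge to $\alpha$. Fix $\epsilon_{\mathbb{D}}\in\mathbb{D}^{+}$. By $\mathbb{D}$-continuity there is $\delta_{\mathbb{D}}\in\mathbb{D}^{+}$ with $\rho_{\mathbb{D}}(f(x),f(\alpha))\prec\epsilon_{\mathbb{D}}$ whenever $d_{\mathbb{D}}(x,\alpha)\prec\delta_{\mathbb{D}}$; by Definition~5 applied to $\delta_{\mathbb{D}}$ there is $N\in\mathbb{N}$ with $d_{\mathbb{D}}(x_{n},\alpha)\prec\delta_{\mathbb{D}}$ for all $n\geq N$, whence $\rho_{\mathbb{D}}(f(x_{n}),f(\alpha))\prec\epsilon_{\mathbb{D}}$ for all $n\geq N$, i.e. $f(x_{n})\to f(\alpha)$. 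This direction is routine.

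For $(ii)\Rightarrow(i)$: I would argue by contraposition. If $f$ is not $\mathbb{D}$-continuous at $\alpha$, then negating Definition~7 produces a single $\epsilon_{\mathbb{D}}\in\mathbb{D}^{+}$ such that for \emph{every} $\delta_{\mathbb{D}}\in\mathbb{D}^{+}$ there is a point $x$ with $d_{\mathbb{D}}(x,\alpha)\prec\delta_{\mathbb{D}}$ but $\rho_{\mathbb{D}}(f(x),f(\alpha))\not\prec\epsilon_{\mathbb{D}}$. The crucial choice is to feed in the scalar thresholds $\delta_{\mathbb{D}}=\tfrac1n=\tfrac1n\mathbf{e}_{1}+\tfrac1n\mathbf{e}_{2}\in\mathbb{D}^{+}$, $n\in\mathbb{N}$, obtaining points $x_{n}$ with $0\preceq d_{\mathbb{D}}(x_{n},\alpha)\prec\tfrac1n$ and $\rho_{\mathbb{D}}(f(x_{n}),f(\alpha))\not\prec\epsilon_{\mathbb{D}}$. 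Writing $d_{\mathbb{D}}(x_{n},\alpha)$ in its idempotent components, both components lie in $[0,\tfrac1n)$ and hence tend to $0$, so $d_{\mathbb{D}}(x_{n},\alpha)\to 0$ and, by Lemma~6, $x_{n}\to\alpha$. On the other hand $\rho_{\mathbb{D}}(f(x_{n}),f(\alpha))\not\prec\epsilon_{\mathbb{D}}$ for all $n$ prevents $\{f(x_{n})\}$ from converging to $f(\alpha)$: if it converged, Definition~5 applied to this very $\epsilon_{\mathbb{D}}$ would force $\rho_{\mathbb{D}}(f(x_{n}),f(\alpha))\prec\epsilon_{\mathbb{D}}$ for all large $n$. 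Thus $(ii)$ fails, which is the contrapositive of $(ii)\Rightarrow(i)$.

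The only genuine subtlety — a mild one — is that $\preceq$ is merely a partial order, so one must (a) negate the $\epsilon$--$\delta$ statement correctly, keeping the witness $\epsilon_{\mathbb{D}}$ fixed while $\delta_{\mathbb{D}}$ ranges over $\mathbb{D}^{+}$, and (b) choose the $\delta_{\mathbb{D}}$'s as honest scalars $\tfrac1n$ so that $d_{\mathbb{D}}(x_{n},\alpha)\to 0$ in \emph{both} idempotent coordinates at once; a sequence of thresholds shrinking in only one coordinate would not yield $x_{n}\to\alpha$. Past that point the proof is the classical one transported verbatim.
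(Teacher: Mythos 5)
Your proof is correct. The paper itself gives no argument for this lemma (it is simply declared ``easy to prove''), and your proposal supplies exactly the standard sequential-continuity argument the authors presumably have in mind: the forward direction is routine, and in the contrapositive direction you correctly handle the one genuine subtlety of the hyperbolic setting, namely negating the $\epsilon$--$\delta$ statement without replacing $\not\prec$ by $\succeq$ (since $\preceq$ is only a partial order) and choosing the thresholds $\delta_{\mathbb{D}}=\tfrac1n\mathbf{e}_{1}+\tfrac1n\mathbf{e}_{2}$ so that $d_{\mathbb{D}}(x_{n},\alpha)\to 0$ in both idempotent coordinates simultaneously.
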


\begin{definition}[$\protect\cite{sai}$]
Let $a$ be a point in a $\mathbb{D}$-metric space $(X,d_{\mathbb{D}}),$ and
assume that $r\in \mathbb{D}^{+}.$ The open $\mathbb{D}$-ball centered at $a$
with hyperbolic radius $r$ is the set 
\begin{equation*}
B_{\mathbb{D}}(a;r)=\{x\in X:d_{\mathbb{D}}(x,a)\prec r\}.
\end{equation*}

The closed $\mathbb{D}$-ball centered at $a$ with hyperbolic radius $r$ is
the set%
\begin{equation*}
\overline{B_{\mathbb{D}}(a;r)}=\{x\in X:d_{\mathbb{D}}(x,a)\preceq r\}.
\end{equation*}

And the $\mathbb{D}$-sphere centered at $a$ with hyperbolic radius $r$ is
the set%
\begin{equation*}
S_{\mathbb{D}}(a;r)=\{x\in X:d_{\mathbb{D}}(x,a)=r\}.
\end{equation*}
\end{definition}

\begin{remark}
Let $\alpha ,\beta \in \mathbb{D}$, with $\alpha =\alpha _{1}\mathbf{e}%
_{1}+\alpha _{2}\mathbf{e}_{2}$ and $\beta =\alpha _{1}\mathbf{e}_{1}+\alpha
_{2}\mathbf{e}_{2}$ be their idempotent representation.~Also let $\alpha
\prec \beta .$

Then the open $\mathbb{D}$-interval $\left( \alpha ,\beta \right) _{\mathbb{D%
}}$ is defined by%
\begin{equation*}
\left( \alpha ,\beta \right) _{\mathbb{D}}=\{x:\alpha \prec x\prec \beta \}
\end{equation*}%
and the closed $\mathbb{D}$-interval $\left[ \alpha ,\beta \right] _{\mathbb{%
D}}$ is defined by%
\begin{equation*}
\left[ \alpha ,\beta \right] _{\mathbb{D}}=\{x:\alpha \preceq x\preceq \beta
\}.
\end{equation*}
\end{remark}

Note that, if $\gamma ,\delta \in \left( \alpha ,\beta \right) _{\mathbb{D}}$
or $\gamma ,\delta \in \left[ \alpha ,\beta \right] _{\mathbb{D}}$ that does
not necessarily mean $\gamma \preceq \delta $ or $\gamma \succeq \delta .$

\begin{definition}[$\protect\cite{sai}$]
Let $(X,d_{\mathbb{D}})$ be a $\mathbb{D}$-metric space and $A\subset
X~(A\neq \phi ).~$Then

$i)$\ a point $\alpha \in A$ is said to be a $\mathbb{D}$-interior point of $%
A$ if there exists a$~B_{\mathbb{D}}(\alpha ;r)~$for some $r\in \mathbb{D}%
^{+}$ such that $B_{\mathbb{D}}(\alpha ;r)\subset A.$

$ii)$\ a point $\alpha \in X$ is said to be a $\mathbb{D}$-limit point of $A$
if for all$~B_{\mathbb{D}}(\alpha ;r)~($ i.e. for all $r\in \mathbb{D}^{+}),$%
~ $A\cap \left( B_{\mathbb{D}}(\alpha ;r)-\{\alpha \}\right) \neq \phi .$
\end{definition}

The set of all $\mathbb{D}$-interior points of $A$ is denoted by $A_{\mathbb{%
D}}^{0}$ and the set of all $\mathbb{D}$-limit points of $A$ is denoted by $%
A_{\mathbb{D}}^{\prime }.$

\begin{definition}[$\protect\cite{sai}$]
Let $(X,d_{\mathbb{D}})$ be a $\mathbb{D}$-metric space and $A\subset
X~(A\neq \phi ).~$Then

$i)$\ $A$ is said to be $\mathbb{D}$-open subset of $X$ if $A=A_{\mathbb{D}%
}^{0}.$

$ii)$\ $A$ is said to be $\mathbb{D}$-closed subset of $X$ if $A\supset A_{%
\mathbb{D}}^{\prime }.$
\end{definition}

\begin{definition}[$\protect\cite{sai}$]
A sequence $\{x_{n}\}$ in a $\mathbb{D}$-metric space $(X,d_{\mathbb{D}})$
is a $\mathbb{D}$-Cauchy sequence if for each $\epsilon _{\mathbb{D}}\succ 0$
there exists an $N\in 
\mathbb{N}
$ such that $d_{\mathbb{D}}(x_{n},x_{m})\prec \epsilon _{\mathbb{D}}$
whenever $n,m\geq N.$
\end{definition}

\begin{remark}
\label{R2.3.1} If we write $d_{\mathbb{D}}(x,y)=d_{1}(x,y)\mathbf{e}%
_{1}+d_{2}(x,y)\mathbf{e}_{2},$ for all $x,y\in X,$ then the sequence $%
\{x_{n}\}$ in a $\mathbb{D}$-metric space $(X,d_{\mathbb{D}})$ is a $\mathbb{%
D}$-Cauchy sequence iff $\{x_{n}\}$ is Cauchy sequence in the real metric
spaces $(X,d_{p})$ for $p=1,2.$
\end{remark}

\begin{definition}[$\protect\cite{sai}$]
A $\mathbb{D}$-metric space $(X,d_{\mathbb{D}})$ is called $\mathbb{D}$%
-complete if every $\mathbb{D}$-Cauchy sequence converges to a point in $%
(X,d_{\mathbb{D}})$.
\end{definition}

\begin{definition}
A subset $K$ of a $\mathbb{D}$-metric space $(X,d_{\mathbb{D}})$ is called a 
$\mathbb{D}$-compact set if for every sequence in $K$ has a subsequence
converging to a point in $K.$ The space $(X,d_{\mathbb{D}})$ is $\mathbb{D}$%
-compact if $X$ is $\mathbb{D}$-compact set, i.e., if each sequence in $X$
has a $\mathbb{D}$-convergent subsequence.
\end{definition}

\begin{definition}
A subset $A$ of a $\mathbb{D}$-metric space $(X,d_{\mathbb{D}})$ is $\mathbb{%
D}$-bounded if there is a number $M\in \mathbb{D}$ such that $d_{\mathbb{D}%
}(a,b)\preceq M$ for all $a,b\in A.$
\end{definition}

\begin{definition}
A subset $A$ of a $\mathbb{D}$-metric space $X$ is called totally $\mathbb{D}
$-bounded if for each $\epsilon _{\mathbb{D}}\in \mathbb{D}^{+}$ there
exists finite number of open $\mathbb{D}$-balls $B_{\mathbb{D}%
}(a_{1},\epsilon _{\mathbb{D}}),$ $B_{\mathbb{D}}(a_{2},\epsilon _{\mathbb{D}%
}),$ $\ldots ,$ $B_{\mathbb{D}}(a_{n},\epsilon _{\mathbb{D}})$ with centers
in $A$ and hyperbolic radius $\epsilon _{\mathbb{D}}$ that cover $A$ i.e., 
\begin{equation*}
A\subseteq B_{\mathbb{D}}(a_{1},\epsilon _{\mathbb{D}})\cup B_{\mathbb{D}%
}(a_{2},\epsilon _{\mathbb{D}})\cup ...\cup B_{\mathbb{D}}(a_{n},\epsilon _{%
\mathbb{D}}).
\end{equation*}
\end{definition}

Now we give an alternative description of $\mathbb{D}$-compactness which is
some time more useful if we want to extend the concept of $\mathbb{D}$%
-compactness to even more general spaces where sequences are not always an
efficient tool, and it is better to have a description of $\mathbb{D}$%
-compactness in terms of covering by $\mathbb{D}$-open sets which will be
called $\mathbb{D}$-open cover.

\begin{definition}
A subset $K$ of a $\mathbb{D}$-metric space $(X,d_{\mathbb{D}})$ is called a 
$\mathbb{D}$-compact set if for every $\mathbb{D}$-open covering $\mathcal{O}
$ of $K,$ i.e., 
\begin{equation*}
K\subset \cup \{O:O\in \mathcal{O}\}
\end{equation*}%
where $\mathcal{O}$ is (finite or infinite) collection of $\mathbb{D}$-open
sets of $(X,d_{\mathbb{D}}),~$there exists finite number of $\mathbb{D}$%
-open sets $O_{1},O_{2},\ldots ,O_{n}\in \mathcal{O}$ such that%
\begin{equation*}
K\subset O_{1}\cup O_{2}\cup \ldots \cup O_{n}.
\end{equation*}
\end{definition}

\begin{definition}
Let $(X,d_{\mathbb{D}})$ be $\mathbb{D}$-metric space. A mapping $%
T:X\rightarrow X$ is said to be $\mathbb{D}$-Lipschitzian if there exists a
constant $k\in \mathbb{D}^{+}$ such that for all $x,y\in X$%
\begin{equation*}
d_{\mathbb{D}}(Tx,Ty)\preceq kd_{\mathbb{D}}(x,y).
\end{equation*}
\end{definition}

The smallest number $k$ satisfying the above is called $\mathbb{D}$%
-Lipschitz constant of $T.$

\begin{definition}
A $\mathbb{D}$-Lipschitzian mapping $T:X\rightarrow X$ with $\mathbb{D}$%
-Lipschitz constant $0\prec k\prec 1$ is said to be a $\mathbb{D}$%
-contraction mapping.
\end{definition}

\begin{remark}
\label{R2.4.1} Clearly every $\mathbb{D}$-Lipschitzian mapping on $\mathbb{D}
$-metric space $(X,d_{\mathbb{D}})$ is $\mathbb{D}$-continuous.
\end{remark}

\begin{definition}
A mapping $T:X\rightarrow X$ is said to be $\mathbb{D}$-contractive if 
\begin{equation*}
d_{\mathbb{D}}(T(x),T(y))\prec d_{\mathbb{D}}(x,y).
\end{equation*}
\end{definition}

\section{Main results}

In this section we prove our main results. We check the validity of some of
the results by examples given in the next section.

\begin{proposition}
\label{P1} Assume that $(X,d_{\mathbb{D}})$ and $(Y,\rho _{\mathbb{D}})$ be
two $\mathbb{D}$-metric spaces and $f:X\longrightarrow Y$. For a point $%
\alpha \in X,$ the following two conditions are equivalent:

i)\ $f$ is $\mathbb{D}$-continuous at $\alpha .~$\ 

ii)\ For all $B_{\mathbb{D}}(f(\alpha );r)\subset Y~(r\in \mathbb{D}^{+}),$
there exists a $B_{\mathbb{D}}(\alpha ;r^{\prime })\subset X~(r^{\prime }\in 
\mathbb{D}^{+})$ such that $f\left( B_{\mathbb{D}}(\alpha ;r^{\prime
})\right) \subset B_{\mathbb{D}}(f(\alpha );r).$
\end{proposition}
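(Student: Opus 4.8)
The plan is simply to unravel the two definitions, since condition (ii) is nothing more than the open-$\mathbb{D}$-ball reformulation of the $\epsilon$--$\delta$ condition defining $\mathbb{D}$-continuity; the only facts used are the definition of the open $\mathbb{D}$-ball, $B_{\mathbb{D}}(a;r)=\{x : d_{\mathbb{D}}(x,a)\prec r\}$, and the fact that $\prec$ means strict inequality in both idempotent components $\mathbf{e}_{1},\mathbf{e}_{2}$.

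\textbf{(i) $\Rightarrow$ (ii).} I would fix an arbitrary open ball $B_{\mathbb{D}}(f(\alpha);r)$ with $r\in\mathbb{D}^{+}$ and apply $\mathbb{D}$-continuity of $f$ at $\alpha$ with $\epsilon_{\mathbb{D}}=r$. This produces some $\delta_{\mathbb{D}}\in\mathbb{D}^{+}$ such that $d_{\mathbb{D}}(x,\alpha)\prec\delta_{\mathbb{D}}$ implies $\rho_{\mathbb{D}}(f(x),f(\alpha))\prec r$. Setting $r'=\delta_{\mathbb{D}}$, any $x\in B_{\mathbb{D}}(\alpha;r')$ satisfies $d_{\mathbb{D}}(x,\alpha)\prec r'=\delta_{\mathbb{D}}$, hence $f(x)\in B_{\mathbb{D}}(f(\alpha);r)$; that is, $f\big(B_{\mathbb{D}}(\alpha;r')\big)\subset B_{\mathbb{D}}(f(\alpha);r)$.

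\textbf{(ii) $\Rightarrow$ (i).} Conversely, I would fix $\epsilon_{\mathbb{D}}\in\mathbb{D}^{+}$, apply (ii) to the ball $B_{\mathbb{D}}(f(\alpha);\epsilon_{\mathbb{D}})$, and obtain $r'\in\mathbb{D}^{+}$ with $f\big(B_{\mathbb{D}}(\alpha;r')\big)\subset B_{\mathbb{D}}(f(\alpha);\epsilon_{\mathbb{D}})$. Put $\delta_{\mathbb{D}}=r'$. Then $d_{\mathbb{D}}(x,\alpha)\prec\delta_{\mathbb{D}}$ means $x\in B_{\mathbb{D}}(\alpha;r')$, so $f(x)\in B_{\mathbb{D}}(f(\alpha);\epsilon_{\mathbb{D}})$, i.e. $\rho_{\mathbb{D}}(f(x),f(\alpha))\prec\epsilon_{\mathbb{D}}$, which is precisely the defining condition of $\mathbb{D}$-continuity at $\alpha$.

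I do not expect any real obstacle: the two statements are tautologically linked via the definition of $B_{\mathbb{D}}$, and the dictionary $r'\leftrightarrow\delta_{\mathbb{D}}$, $r\leftrightarrow\epsilon_{\mathbb{D}}$ makes both implications immediate. The one point to be careful about is purely formal: the defining condition of $\mathbb{D}$-continuity at $\alpha$ is to be read with the second argument of $\rho_{\mathbb{D}}$ equal to $f(\alpha)$, and one should keep in mind that $\prec$ on $\mathbb{D}$ is a strict partial order, so the relation "$\prec$" appearing in the ball condition is genuinely the simultaneous strict inequality in the two real coordinates determined by the projections $p_{1},p_{2}$ — but exploiting this requires no extra work beyond invoking the definitions above.
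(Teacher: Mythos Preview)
Your proposal is correct and follows essentially the same argument as the paper: both directions are handled by identifying $r\leftrightarrow\epsilon_{\mathbb{D}}$ and $r'\leftrightarrow\delta_{\mathbb{D}}$ and translating between the $\epsilon$--$\delta$ definition of $\mathbb{D}$-continuity and membership in open $\mathbb{D}$-balls. If anything, your version spells out the correspondence a little more explicitly than the paper does.
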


\begin{proof}
i)$\Longrightarrow $ii):\ Assume that $f$ is $\mathbb{D}$-continuous at $%
\alpha .$ Clearly, $B_{\mathbb{D}}(f(\alpha );r)$ is an open $\mathbb{D}$%
-ball centered at $f(\alpha )$ in $(Y,\rho _{\mathbb{D}}).$ Since $f$ is $%
\mathbb{D}$-continuous at $\alpha ,$ there is an $r^{\prime }\succ 0$ such
that 
\begin{equation*}
\rho _{\mathbb{D}}(f(x),f(\alpha ))\prec r\text{ whenever }d_{\mathbb{D}%
}(x,\alpha )\prec r^{\prime }.
\end{equation*}%
This means that $f\left( B_{\mathbb{D}}(\alpha ;r^{\prime })\right)
\subseteq B_{\mathbb{D}}(f(\alpha );r).$

ii)$\Longrightarrow $i):\ Now $B_{\mathbb{D}}(f(\alpha );\epsilon _{\mathbb{D%
}})$ $(\epsilon _{\mathbb{D}}\in \mathbb{D}^{+})$ is an open $\mathbb{D}$%
-ball centered at $f(\alpha ),$ then by ii), there exists an open $\mathbb{D}
$-ball $B_{\mathbb{D}}(\alpha ;\delta _{\mathbb{D}})\subset X~(\delta _{%
\mathbb{D}}\in \mathbb{D}^{+})$ such that 
\begin{equation*}
f\left( B_{\mathbb{D}}(\alpha ;\delta _{\mathbb{D}})\right) \subset B_{%
\mathbb{D}}(f(\alpha );\epsilon _{\mathbb{D}})
\end{equation*}%
which implies that $\rho _{\mathbb{D}}(f(x),f(\alpha ))\prec \epsilon _{%
\mathbb{D}}$ whenever $d_{\mathbb{D}}(x,\alpha )\prec \delta _{\mathbb{D}}.$
So $f$ is $\mathbb{D}$-continuous at $\alpha .$
\end{proof}

\begin{proposition}
Assume that $(X,d_{\mathbb{D}})$ and $(Y,\rho _{\mathbb{D}})$ be two $%
\mathbb{D}$-metric spaces and $f:X\longrightarrow Y$. Then the following two
conditions are equivalent:

i)\ $f$ is $\mathbb{D}$-continuous$.$

ii)\ Whenever $V$ is a $\mathbb{D}$-open subset of $Y,$ the inverse image $%
f^{-1}(V)$ is a $\mathbb{D}$-open set in $X.$

\begin{proof}
i)$\Longrightarrow $ii): Assume that $f$ is $\mathbb{D}$-continuous and $%
V\subset Y$ is $\mathbb{D}$-open. We prove that $f^{-1}(V)$ is $\mathbb{D}$%
-open. For any $\alpha \in $ $f^{-1}(V),$ $f(\alpha )\in V,$ and we know
that there is a $\mathbb{D}$-open ball $U$ centered at $\alpha $ such that $%
f(U)\subset V.$ So $U\subset f^{-1}(V),$ and since $\alpha \ $\ is
arbitrary, all the points of $f^{-1}(V)$ are interior points. Hence, $%
f^{-1}(V)$ is $\mathbb{D}$-open.

ii)$\Longrightarrow $i):\ Assume that the inverse images of $\mathbb{D}$%
-open sets are $\mathbb{D}$-open. To prove that $f$ is $\mathbb{D}$%
-continuous at an arbitrary point $\alpha ,$ Proposition \ref{P1} tells us
that it suffices to show that for any $\mathbb{D}$-open ball $B_{\mathbb{D}%
}(f(\alpha );r)\subset Y~(r\in \mathbb{D}^{+}),$ there is a $\mathbb{D}$%
-open ball $B_{\mathbb{D}}(\alpha ;r^{\prime })\subset X~(r^{\prime }\in 
\mathbb{D}^{+})$ such that $f\left( B_{\mathbb{D}}(\alpha ;r^{\prime
})\right) \subset B_{\mathbb{D}}(f(\alpha );r).$ But this is easy: Since the
inverse image of an open set is open, we can simply choose $U=f^{-1}(B_{%
\mathbb{D}}(f(\alpha );r)).$
\end{proof}
\end{proposition}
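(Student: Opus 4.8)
The plan is to run the classical point-set-topology argument that a map is continuous exactly when preimages of open sets are open, with Proposition~\ref{P1} serving as the bridge between the $\epsilon$-$\delta$ description of $\mathbb{D}$-continuity at a point and its ``open ball'' description. One auxiliary fact is needed (only for the direction ii)$\Rightarrow$i)), and I would record it first: every open $\mathbb{D}$-ball $B_{\mathbb{D}}(a;r)$ is a $\mathbb{D}$-open set. Indeed, if $x\in B_{\mathbb{D}}(a;r)$ then $\rho:=r-d_{\mathbb{D}}(x,a)$ lies in $\mathbb{D}^{+}$, since $d_{\mathbb{D}}(x,a)\prec r$ means both idempotent components of $d_{\mathbb{D}}(x,a)$ are strictly below those of $r$; and for $y\in B_{\mathbb{D}}(x;\rho)$ the triangle inequality $(iii)$ gives $d_{\mathbb{D}}(y,a)\preceq d_{\mathbb{D}}(y,x)+d_{\mathbb{D}}(x,a)\prec\rho+d_{\mathbb{D}}(x,a)=r$, using that $u\preceq v\prec w$ forces $u\prec w$ in the partial order on $\mathbb{D}$. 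Hence $B_{\mathbb{D}}(x;\rho)\subset B_{\mathbb{D}}(a;r)$, so $x$ is a $\mathbb{D}$-interior point. This is the only place where the hyperbolic-valued setting needs attention, and it goes through because $\prec$ and $\preceq$ act componentwise along $\mathbf{e}_{1},\mathbf{e}_{2}$.

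For i)$\Rightarrow$ii): assume $f$ is $\mathbb{D}$-continuous and $V\subset Y$ is $\mathbb{D}$-open, and fix an arbitrary $\alpha\in f^{-1}(V)$. Then $f(\alpha)\in V=V_{\mathbb{D}}^{0}$, so by the definition of $\mathbb{D}$-interior point there is $r\in\mathbb{D}^{+}$ with $B_{\mathbb{D}}(f(\alpha);r)\subset V$. Applying Proposition~\ref{P1} at $\alpha$ to this ball produces $r'\in\mathbb{D}^{+}$ with $f(B_{\mathbb{D}}(\alpha;r'))\subset B_{\mathbb{D}}(f(\alpha);r)\subset V$, i.e. $B_{\mathbb{D}}(\alpha;r')\subset f^{-1}(V)$. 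Thus every point of $f^{-1}(V)$ is a $\mathbb{D}$-interior point, so $f^{-1}(V)=(f^{-1}(V))_{\mathbb{D}}^{0}$ and $f^{-1}(V)$ is $\mathbb{D}$-open.

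For ii)$\Rightarrow$i): assume preimages of $\mathbb{D}$-open sets are $\mathbb{D}$-open, fix $\alpha\in X$, and let $B_{\mathbb{D}}(f(\alpha);r)\subset Y$ be an arbitrary open $\mathbb{D}$-ball with $r\in\mathbb{D}^{+}$. By the auxiliary fact this ball is $\mathbb{D}$-open, so $U:=f^{-1}(B_{\mathbb{D}}(f(\alpha);r))$ is $\mathbb{D}$-open, and $\alpha\in U$ since $\rho_{\mathbb{D}}(f(\alpha),f(\alpha))=0\prec r$. Hence $\alpha$ is a $\mathbb{D}$-interior point of $U$, so there is $r'\in\mathbb{D}^{+}$ with $B_{\mathbb{D}}(\alpha;r')\subset U$, whence $f(B_{\mathbb{D}}(\alpha;r'))\subset B_{\mathbb{D}}(f(\alpha);r)$. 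By Proposition~\ref{P1} this gives $\mathbb{D}$-continuity of $f$ at $\alpha$, and since $\alpha$ was arbitrary, $f$ is $\mathbb{D}$-continuous on $X$.

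Everything beyond the auxiliary observation transfers verbatim from the scalar metric case, so the real (and only) obstacle is that lemma: I would either cite it from the preliminary material on $\mathbb{D}$-balls or insert the two-line proof given above, since the ii)$\Rightarrow$i) half genuinely relies on it. (An alternative to Proposition~\ref{P1} in the i)$\Rightarrow$ii) direction would be to pass to complements and use Lemma~2.7 together with the characterization of $\mathbb{D}$-closed sets via $\mathbb{D}$-limit points, but the ball route above is shorter and keeps both implications symmetric.)
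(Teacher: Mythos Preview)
Your proof is correct and follows essentially the same route as the paper's: both directions hinge on Proposition~\ref{P1}, with ii)$\Rightarrow$i) obtained by taking $U=f^{-1}(B_{\mathbb{D}}(f(\alpha);r))$ and extracting a ball around $\alpha$ from its $\mathbb{D}$-openness. The only difference is that you make explicit (and prove) the fact that open $\mathbb{D}$-balls are $\mathbb{D}$-open, which the paper uses tacitly when it asserts that the inverse image of $B_{\mathbb{D}}(f(\alpha);r)$ is $\mathbb{D}$-open; this is a welcome clarification rather than a change of strategy.
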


Note that, the above proposition still holds if we replace $\mathbb{D}$-open
sets by $\mathbb{D}$-closed subsets, since the inverse image commutes with
complements and the $\mathbb{D}$-closed sets are the complements of $\mathbb{%
D}$-open sets.

\begin{proposition}
\label{P2.3.1}Let $\{x_{n}\}$ be a sequence in a $\mathbb{D}$-metric space $%
(X,d_{\mathbb{D}})$ for which%
\begin{equation*}
\sum\limits_{i=1}^{\infty }d_{\mathbb{D}}(x_{i},x_{i+1})\prec \infty _{%
\mathbb{D}},
\end{equation*}
where $\infty _{\mathbb{D}}=\infty \mathbf{e}_{1}+\infty \mathbf{e}_{2}$ 
\cite{gh}. Then $\{x_{n}\}$ is a $\mathbb{D}$-Cauchy sequence in $(X,d_{%
\mathbb{D}})$.

\begin{proof}
We write, $d_{\mathbb{D}}(x,y)=d_{1}(x,y)\mathbf{e}_{1}+d_{2}(x,y)\mathbf{e}%
_{2},$ then $d_{1}(x,y)$ and $d_{2}(x,y)$ are real metrices on $X.$.

Now $\sum\limits_{i=1}^{\infty }d_{\mathbb{D}}(x_{i},x_{i+1})\prec \infty _{%
\mathbb{D}}\Longrightarrow \sum\limits_{i=1}^{\infty
}d_{1}(x_{i},x_{i+1})<\infty $ and $\sum\limits_{i=1}^{\infty
}d_{2}(x_{i},x_{i+1})<\infty .$

Then $\{x_{n}\}$ is Cauchy sequence in the real metric space $(X,d_{p})$ for 
$p=1,2$ [\cite{kh}, Exercise 2.14] and hence by Remark \ref{R2.3.1}, $%
\{x_{n}\}$ is $\mathbb{D}$-Cauchy sequence in $(X,d_{\mathbb{D}}).$
\end{proof}
\end{proposition}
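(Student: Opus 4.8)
The plan is to reduce the hyperbolic-valued statement to two real-valued statements by means of the idempotent decomposition, exactly as the paper has already set up in Remark \ref{R2.3.1}. First I would write $d_{\mathbb{D}}(x,y)=d_{1}(x,y)\mathbf{e}_{1}+d_{2}(x,y)\mathbf{e}_{2}$, where $d_{1}$ and $d_{2}$ are genuine real metrics on $X$; this is the observation recorded right after Definition 2.1. Since the partial order $\preceq$ on $\mathbb{D}$ is componentwise in the idempotent basis, and since $\infty_{\mathbb{D}}=\infty\mathbf{e}_{1}+\infty\mathbf{e}_{2}$, the hypothesis $\sum_{i=1}^{\infty}d_{\mathbb{D}}(x_{i},x_{i+1})\prec\infty_{\mathbb{D}}$ unpacks (using that the partial sums sit in $\mathbb{D}_0^+$ and the supremum/infimum conventions made in the preliminaries) into the pair of real statements $\sum_{i=1}^{\infty}d_{1}(x_{i},x_{i+1})<\infty$ and $\sum_{i=1}^{\infty}d_{2}(x_{i},x_{i+1})<\infty$.

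Next I would invoke the classical real-analysis fact: if the series of consecutive distances of a sequence in a metric space converges, then the sequence is Cauchy. Concretely, for $n>m$ one has $d_{p}(x_{m},x_{n})\leq\sum_{i=m}^{n-1}d_{p}(x_{i},x_{i+1})\leq\sum_{i=m}^{\infty}d_{p}(x_{i},x_{i+1})$ by the triangle inequality, and the right-hand tail tends to $0$ as $m\to\infty$ because the series converges; hence $\{x_{n}\}$ is Cauchy in $(X,d_{p})$ for $p=1,2$. This is precisely the cited exercise from the referenced text, so I would just quote it rather than reprove it.

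Finally I would close the loop by applying Remark \ref{R2.3.1}: a sequence is $\mathbb{D}$-Cauchy in $(X,d_{\mathbb{D}})$ if and only if it is Cauchy in both real metric spaces $(X,d_{1})$ and $(X,d_{2})$. Since we have just established the latter, $\{x_{n}\}$ is a $\mathbb{D}$-Cauchy sequence, which completes the proof.

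The only step that requires genuine care — and the one I would expect to be the main obstacle — is the very first translation: justifying rigorously that $\sum d_{\mathbb{D}}(x_i,x_{i+1})\prec\infty_{\mathbb{D}}$ is equivalent to convergence of both real series. One must be careful about what the infinite sum and the symbol $\infty_{\mathbb{D}}$ mean (a supremum of partial sums in the sense defined in the preliminaries, taken componentwise via $A_{\mathbf{e}_1}$ and $A_{\mathbf{e}_2}$), and about the fact that $\prec$ is strict-componentwise; once that bookkeeping is done the rest is immediate. Everything else is routine and leans entirely on results already available in the excerpt.
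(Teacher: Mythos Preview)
Your proposal is correct and follows essentially the same route as the paper: decompose $d_{\mathbb{D}}$ into its idempotent components $d_{1},d_{2}$, translate the hypothesis into convergence of both real series, apply the standard real-metric fact (the cited exercise) to get Cauchy in each $(X,d_{p})$, and conclude via Remark~\ref{R2.3.1}. Your write-up is in fact slightly more explicit than the paper's own proof about the bookkeeping behind the first translation step, but the structure and the ingredients are identical.
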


\begin{proposition}
\label{p cplt} Assume that $A$ be a subset of a $\mathbb{D}$-complete metric
space $(X,d_{\mathbb{D}}).$ Then the subspace $(A,d_{\mathbb{D}}|A\times A)$
is $\mathbb{D}$-complete if and only if $A$ is $\mathbb{D}$-closed subset of 
$X.$
\end{proposition}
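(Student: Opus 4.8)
The plan is to reduce the $\mathbb{D}$-valued statement to its two real components $d_1$ and $d_2$ and invoke the corresponding classical fact about real metric spaces, namely that a subspace of a complete metric space is complete iff it is closed. Throughout I would write $d_{\mathbb{D}}(x,y)=d_1(x,y)\mathbf{e}_1+d_2(x,y)\mathbf{e}_2$, as observed right after the definition of a $\mathbb{D}$-metric, so that $(X,d_p)$ is a genuine real metric space for $p=1,2$. The key translation facts I will need are: (a) by Remark \ref{R2.3.1}, a sequence is $\mathbb{D}$-Cauchy in $(X,d_{\mathbb{D}})$ iff it is Cauchy in both $(X,d_1)$ and $(X,d_2)$; (b) by Lemma (the one characterizing $\mathbb{D}$-convergence), $x_n\to\alpha$ in $(X,d_{\mathbb{D}})$ iff $d_{\mathbb{D}}(x_n,\alpha)\to 0$, which by the definition of the order and of limits in $\mathbb{D}$ is equivalent to $d_1(x_n,\alpha)\to 0$ and $d_2(x_n,\alpha)\to 0$, i.e. convergence in both real metrics; and (c) $A$ is $\mathbb{D}$-closed in $(X,d_{\mathbb{D}})$ iff it is closed in both $(X,d_1)$ and $(X,d_2)$, which follows from the fact that a $\mathbb{D}$-open ball $B_{\mathbb{D}}(a;r)$ with $r=r_1\mathbf{e}_1+r_2\mathbf{e}_2$ is exactly the intersection of the two real balls $B_{d_1}(a;r_1)\cap B_{d_2}(a;r_2)$, so limit points coincide.

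For the forward direction, I would assume $(A,d_{\mathbb{D}}|_{A\times A})$ is $\mathbb{D}$-complete and show $A$ is $\mathbb{D}$-closed by showing $A_{\mathbb{D}}'\subset A$. Let $\alpha\in A_{\mathbb{D}}'$; then there is a sequence $\{x_n\}$ in $A$ with $x_n\to\alpha$ in $X$. A convergent sequence is $\mathbb{D}$-Cauchy (a one-line consequence of the triangle inequality and of $d_{\mathbb{D}}(x_n,x_m)\preceq d_{\mathbb{D}}(x_n,\alpha)+d_{\mathbb{D}}(\alpha,x_m)$), hence $\{x_n\}$ is $\mathbb{D}$-Cauchy in $A$, so by $\mathbb{D}$-completeness of $A$ it converges to some $a\in A$. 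Uniqueness of $\mathbb{D}$-limits (which in turn follows componentwise from uniqueness of limits in $\mathbb{R}$, or directly from the metric axiom $d_{\mathbb{D}}(x,y)=0\iff x=y$) forces $a=\alpha$, so $\alpha\in A$.

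For the converse, assume $A$ is $\mathbb{D}$-closed in $X$ and let $\{x_n\}$ be a $\mathbb{D}$-Cauchy sequence in $A$. Then $\{x_n\}$ is $\mathbb{D}$-Cauchy in $X$, and since $X$ is $\mathbb{D}$-complete there is $\alpha\in X$ with $x_n\to\alpha$. Because every term lies in $A$, the limit $\alpha$ is either in $A$ or is a $\mathbb{D}$-limit point of $A$; in either case $\mathbb{D}$-closedness gives $\alpha\in A$. Hence the $\mathbb{D}$-Cauchy sequence converges in $A$, so $(A,d_{\mathbb{D}}|_{A\times A})$ is $\mathbb{D}$-complete. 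Alternatively, one can run the entire argument through the real components: $A$ is $d_p$-closed for $p=1,2$, each $(X,d_p)$ is $d_p$-complete (again by Remark \ref{R2.3.1}), so each $(A,d_p|_{A\times A})$ is complete by the classical result, and then Remark \ref{R2.3.1} reassembles this into $\mathbb{D}$-completeness of $A$.

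I do not expect a serious obstacle here; the only point requiring a little care is making fact (c) precise — that the notions of $\mathbb{D}$-closedness and of $d_1$- and $d_2$-closedness agree — since the whole argument hinges on moving freely between the hyperbolic picture and the pair of real pictures. Once the ball identity $B_{\mathbb{D}}(a;r_1\mathbf{e}_1+r_2\mathbf{e}_2)=B_{d_1}(a;r_1)\cap B_{d_2}(a;r_2)$ and the characterization of $\mathbb{D}$-limit points in terms of real limit points are spelled out, the rest is a routine transcription of the standard proof. I would present the componentwise route as the main proof since it is shortest and reuses Remark \ref{R2.3.1}, and perhaps remark that the direct sequential argument above also works verbatim.
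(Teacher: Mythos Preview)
Your direct sequential argument is correct and is precisely the classical proof the paper has in mind: the paper gives no argument of its own and simply writes ``Proof is along the similar lines as in [Lindstr\o m, Theorem 3.4.4]'', which is the standard ``closed $\Leftrightarrow$ complete in a complete space'' proof you reproduced.

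However, the componentwise route you say you would present as the \emph{main} proof has a genuine gap, exactly at the point you flagged as needing care. Your fact~(c) is false in general, and Remark~\ref{R2.3.1} does \emph{not} give $d_p$-completeness of $(X,d_p)$ from $\mathbb{D}$-completeness of $(X,d_{\mathbb{D}})$. The ball identity $B_{\mathbb{D}}(a;r_1\mathbf{e}_1+r_2\mathbf{e}_2)=B_{d_1}(a;r_1)\cap B_{d_2}(a;r_2)$ shows only that the $\mathbb{D}$-topology is the \emph{supremum} of the $d_1$- and $d_2$-topologies (equivalently, the topology of $\max(d_1,d_2)$); hence ``$d_1$-closed and $d_2$-closed'' implies ``$\mathbb{D}$-closed'', but not conversely. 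Likewise, a $d_1$-Cauchy sequence need not be $d_2$-Cauchy, so $\mathbb{D}$-completeness of $X$ says nothing about it. For a concrete counterexample, take $X=(0,1]$ with $d_1(x,y)=|x-y|$ and $d_2(x,y)=\bigl|\tfrac{1}{x}-\tfrac{1}{y}\bigr|$: then $(X,d_{\mathbb{D}})$ is $\mathbb{D}$-complete (any $\mathbb{D}$-Cauchy sequence is $d_2$-Cauchy, hence converges in $(0,1]$), but $(X,d_1)$ is not complete. Similarly, with $X=\mathbb{R}$, $d_1$ the usual metric and $d_2$ the discrete metric, the set $A=(0,1)$ is $\mathbb{D}$-closed (it has no $\mathbb{D}$-limit points at all) yet is not $d_1$-closed. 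So keep the direct sequential proof as the main argument and drop the componentwise alternative.
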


\begin{proof}
Proof is along the similar lines as in [ \cite{lin}, Theorem $3.4.4$]
\end{proof}

\begin{proposition}
Let $K$ be a $\mathbb{D}$-compact subset of a $\mathbb{D}$-metric space $%
(X,d_{\mathbb{D}}).$ Then $K$ is a totally $\mathbb{D}$-bounded.

\begin{proof}
Assume that $K$ is not totally $\mathbb{D}$-bounded.

There is an $\epsilon _{\mathbb{D}}\in \mathbb{D}^{+}$\ such that no finite
collection of~$\mathbb{D}$-open balls with hyperbolic radius $\epsilon _{%
\mathbb{D}}$ covers $K.$

Now we construct a sequence $\{x_{n}\}$ in $K$ that does not have a $\mathbb{%
D}$-convergent subsequence.

Choose an arbitrary elements $x_{1}\in K.$ Since $B_{\mathbb{D}%
}(x_{1},\epsilon _{\mathbb{D}})$ does not cover $K,$ we can choose $x_{2}\in
K-B_{\mathbb{D}}(x_{1},\epsilon _{\mathbb{D}}).$

Again since $B_{\mathbb{D}}(x_{1},\epsilon _{\mathbb{D}})$ and $B_{\mathbb{D}%
}(x_{2},\epsilon _{\mathbb{D}})$ do not cover $K,$ we can choose $x_{2}\in
K-\left( B_{\mathbb{D}}(x_{1},\epsilon _{\mathbb{D}})\cup B_{\mathbb{D}%
}(x_{2},\epsilon _{\mathbb{D}})\right) .$

Continuing this way, we get a sequence $\{x_{n}\}$ such that%
\begin{equation*}
x_{n}\in K-\left( B_{\mathbb{D}}(x_{1},\epsilon _{\mathbb{D}})\cup B_{%
\mathbb{D}}(x_{2},\epsilon _{\mathbb{D}})\cup \cdots \cup B_{\mathbb{D}%
}(x_{n-1},\epsilon _{\mathbb{D}})\right) .
\end{equation*}

This means that $d_{\mathbb{D}}(x_{n},x_{m})\succeq \epsilon _{\mathbb{D}}$
for all $n,m\in 
\mathbb{N}
,n\neq m,$ and hence $\{x_{n}\}$ has no $\mathbb{D}$-convergent subsequence.
\end{proof}
\end{proposition}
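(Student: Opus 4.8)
The statement to prove is that a $\mathbb{D}$-compact subset $K$ of a $\mathbb{D}$-metric space is totally $\mathbb{D}$-bounded. The natural strategy is the standard contrapositive argument familiar from real metric space theory, adapted to the hyperbolic-valued setting: assume $K$ is \emph{not} totally $\mathbb{D}$-bounded, produce a sequence in $K$ with no $\mathbb{D}$-convergent subsequence, and thereby contradict $\mathbb{D}$-compactness (in the sequential sense of the earlier definition). So first I would negate total $\mathbb{D}$-boundedness: there exists a fixed $\epsilon_{\mathbb{D}} \in \mathbb{D}^{+}$ such that no finite collection of open $\mathbb{D}$-balls of radius $\epsilon_{\mathbb{D}}$ with centers in $K$ covers $K$.

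Next I would build the bad sequence greedily. Pick $x_{1} \in K$ arbitrarily; since $B_{\mathbb{D}}(x_{1};\epsilon_{\mathbb{D}})$ alone does not cover $K$, pick $x_{2} \in K \setminus B_{\mathbb{D}}(x_{1};\epsilon_{\mathbb{D}})$; inductively, having chosen $x_{1},\dots,x_{n-1}$, the union $\bigcup_{i=1}^{n-1} B_{\mathbb{D}}(x_{i};\epsilon_{\mathbb{D}})$ does not cover $K$, so pick $x_{n} \in K \setminus \bigcup_{i=1}^{n-1} B_{\mathbb{D}}(x_{i};\epsilon_{\mathbb{D}})$. By construction, for every $m < n$ we have $x_{n} \notin B_{\mathbb{D}}(x_{m};\epsilon_{\mathbb{D}})$, i.e.\ $d_{\mathbb{D}}(x_{n},x_{m}) \not\prec \epsilon_{\mathbb{D}}$ for all distinct indices. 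The delicate point, and the one I expect to be the main obstacle, is that $\not\prec$ is weaker than $\succeq$ in the partial order on $\mathbb{D}$: writing $d_{\mathbb{D}} = d_{1}\mathbf{e}_{1} + d_{2}\mathbf{e}_{2}$ and $\epsilon_{\mathbb{D}} = \epsilon_{1}\mathbf{e}_{1} + \epsilon_{2}\mathbf{e}_{2}$, the condition $d_{\mathbb{D}}(x_{n},x_{m}) \not\prec \epsilon_{\mathbb{D}}$ only says $d_{1}(x_{n},x_{m}) \geq \epsilon_{1}$ \emph{or} $d_{2}(x_{n},x_{m}) \geq \epsilon_{2}$, not both. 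So one cannot immediately conclude $d_{\mathbb{D}}(x_{n},x_{m}) \succeq \epsilon_{\mathbb{D}}$ as the displayed line in the excerpt asserts; that claim should be handled more carefully.

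To repair this, I would argue componentwise together with a pigeonhole step. For each pair $m < n$, at least one of the two coordinates is ``bad'': say the pair has type $1$ if $d_{1}(x_{n},x_{m}) \geq \epsilon_{1}$ and type $2$ otherwise (so then $d_{2}(x_{n},x_{m}) \geq \epsilon_{2}$). Now suppose, for contradiction, that $\{x_{n}\}$ had a $\mathbb{D}$-convergent subsequence $\{x_{n_{k}}\}$; by Lemma (the sequential characterization of $\mathbb{D}$-convergence) and Remark~\ref{R2.3.1}, this subsequence is Cauchy in both real metrics $d_{1}$ and $d_{2}$. Hence there is $K_{0}$ such that for $k,l \geq K_{0}$ both $d_{1}(x_{n_{k}},x_{n_{l}}) < \epsilon_{1}$ and $d_{2}(x_{n_{k}},x_{n_{l}}) < \epsilon_{2}$; but then the pair $(n_{k},n_{l})$ has neither type, contradicting the construction. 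Therefore no $\mathbb{D}$-convergent subsequence exists, contradicting the $\mathbb{D}$-compactness of $K$, and the proposition follows. (One can also streamline the Cauchy argument: a convergent subsequence in the $\mathbb{D}$-metric is in particular a $\mathbb{D}$-Cauchy sequence, and then invoke Remark~\ref{R2.3.1} directly.) The only real work is articulating the partial-order subtlety cleanly; everything else is the classical construction verbatim.
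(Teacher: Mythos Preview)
Your proposal is correct and follows exactly the paper's route: the same contrapositive setup and the same greedy construction of the sequence $\{x_n\}$. Where you differ is that you have actually done more than the paper. The paper's proof jumps directly from $x_n \notin \bigcup_{i<n} B_{\mathbb{D}}(x_i;\epsilon_{\mathbb{D}})$ to the assertion $d_{\mathbb{D}}(x_n,x_m) \succeq \epsilon_{\mathbb{D}}$ for all $n\neq m$, precisely the step you flag as unjustified in a partial order; the construction only yields $d_{\mathbb{D}}(x_n,x_m) \not\prec \epsilon_{\mathbb{D}}$. Your componentwise Cauchy argument via Remark~\ref{R2.3.1} is the clean way to close that gap, and is a genuine improvement over the paper's presentation.
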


\begin{theorem}[Extreme Value Theorem]
\label{t evt} Let $K$ be a nonempty $\mathbb{D}$-compact subset of $(X,d_{%
\mathbb{D}})$~and $f:K\rightarrow \mathbb{D}$ be a $\mathbb{D}$-continuous
function. Let $\sup f(K)=M=M_{1}\mathbf{e}_{1}+M_{2}\mathbf{e}_{2}$ and $%
\inf f(K)=m=m_{1}\mathbf{e}_{1}+m_{2}\mathbf{e}_{2},$ then there exist $%
a,b,c,d\in K$ such that $%
p_{1}(f(a))=M_{1},~p_{2}(f(b))=M_{2},~p_{1}(f(c))=m_{1},~p_{2}(f(d))=m_{2}.$

\begin{proof}
Since we have $\sup f(K)=\sup f(K)_{\mathbf{e}_{1}}\mathbf{e}_{1}+\sup f(K)_{%
\mathbf{e}_{2}}\mathbf{e}_{2}=M_{1}\mathbf{e}_{1}+M_{2}\mathbf{e}_{2},$ we
have $M_{1}=\sup f(K)_{\mathbf{e}_{1}},$ where 
\begin{equation*}
f(K)_{\mathbf{e}_{1}}=\{x\in 
\mathbb{R}
:~\text{there exists }y\in 
\mathbb{R}
~\text{such that }x\mathbf{e}_{1}+y\mathbf{e}_{2}\in f(K)\}.
\end{equation*}

Thus, there exists a sequence $\{x_{n}\}~\ $in $f(K)_{\mathbf{e}_{1}}$ which
converges to $M_{1}.$

Again for each $x_{n}~\ $in $f(K)_{\mathbf{e}_{1}}~$there exists $y_{n}\in
f(K)_{\mathbf{e}_{2}}$ such that $x_{n}\mathbf{e}_{1}+y_{n}\mathbf{e}_{2}\in
f(K),~$for all $n\in \mathbb{N}.$

Again since $f(K)$ is a $\mathbb{D}$-continuous image of a $\mathbb{D}$%
-compact set, $f(K)$ is $\mathbb{D}$-compact.

Thus the sequence $\{x_{n}\mathbf{e}_{1}+y_{n}\mathbf{e}_{2}\}$ in $f(K)$
has a $\mathbb{D}$-convergent subsquence, say $\{x_{n_{k}}\mathbf{e}%
_{1}+y_{n_{k}}\mathbf{e}_{2}\}$~converging to $M_{1}\mathbf{e}_{1}+Y\mathbf{e%
}_{2}$ (say)$.$

Now, let $\alpha _{n_{k}}\in K$ be such that $f(\alpha _{n_{k}})=x_{n_{k}}%
\mathbf{e}_{1}+y_{n_{k}}\mathbf{e}_{2},~~$for all $n.$

Then also $\{\alpha _{n_{k}}\}$ has a $\mathbb{D}$-convergent subsquence,
say $\{\alpha _{n_{j}}\},$ since $K$ is $\mathbb{D}$-compact.

Let $\{\alpha _{n_{j}}\}$ converges to $a.$

Thus, $f(a)=M_{1}\mathbf{e}_{1}+Y\mathbf{e}_{2}.$

Which implies $p_{1}(f(a))=M_{1}.$

Again since, $M_{2}=\sup f(K)_{\mathbf{e}_{2}},$ where 
\begin{equation*}
f(K)_{\mathbf{e}_{2}}=\{y^{\prime }\in 
\mathbb{R}
:\text{there exists}~x^{\prime }\in 
\mathbb{R}
~\text{such that }x^{\prime }\mathbf{e}_{1}+y^{\prime }\mathbf{e}_{2}\in
f(K)\}.
\end{equation*}

Thus, there exists a sequence $\{y_{n}^{\prime }\}~\ $in $f(K)_{\mathbf{e}%
_{2}}$ which converges to $M_{2}.$

Again for each $y_{n}^{\prime }~\ $in $f(K)_{\mathbf{e}_{2}}~$there exists$\
x_{n}^{\prime }\in f(K)_{\mathbf{e}_{1}}$ such that $x_{n}^{\prime }\mathbf{e%
}_{1}+y_{n}^{\prime }\mathbf{e}_{2}\in f(K),~\forall n.$

Again since $f(K)$ is a $\mathbb{D}$-continuous image of a $\mathbb{D}$%
-compact set, $f(K)$ is $\mathbb{D}$-compact.

Thus the sequence $\{x_{n}^{\prime }\mathbf{e}_{1}+y_{n}^{\prime }\mathbf{e}%
_{2}\}$ in $f(K)$ has a $\mathbb{D}$-convergent subsquence, say $%
\{x_{n_{k}}^{\prime }\mathbf{e}_{1}+y_{n_{k}}^{\prime }\mathbf{e}_{2}\}$%
~converging to $X\mathbf{e}_{1}+M_{2}\mathbf{e}_{2}$ (say)$.$

Now, let $b_{n_{k}}\in K$ be such that $f(b_{n_{k}})=x_{n_{k}}^{\prime }%
\mathbf{e}_{1}+y_{n_{k}}^{\prime }\mathbf{e}_{2},~\forall n.$

Then also $\{b_{n_{k}}\}$ has a $\mathbb{D}$-convergent subsquence, say $%
\{b_{n_{j}}\},$ since $K$ is $\mathbb{D}$-compact.

Let $\{b_{n_{j}}\}$ converges to $b.$

Thus, $f(b)=X\mathbf{e}_{1}+M_{2}\mathbf{e}_{2}.$

Which implies $p_{1}(f(b))=M_{2}.$

Similarly, we can find $c,d\in K$ such that $p_{1}(f(c))=m_{1}$ and $%
~p_{2}(f(d))=m_{2}.$

This completes the proof.
\end{proof}
\end{theorem}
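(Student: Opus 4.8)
The plan is to reduce the statement to a fact about subsets of $\mathbb{D}$ and then exploit the coordinatewise definitions of $\sup$ and $\inf$ on $\mathbb{D}$ together with $\mathbb{D}$-compactness. Two preliminary observations set this up. First, $f(K)$ is itself $\mathbb{D}$-compact: given a sequence $\{f(z_n)\}$ in $f(K)$ with $z_n\in K$, $\mathbb{D}$-compactness of $K$ yields a subsequence $z_{n_k}\to z\in K$, and the sequential characterisation of $\mathbb{D}$-continuity (the lemma following the definition of $\mathbb{D}$-continuity) gives $f(z_{n_k})\to f(z)\in f(K)$. Second, $f(K)$ being $\mathbb{D}$-compact is totally $\mathbb{D}$-bounded, hence $\mathbb{D}$-bounded, so the real sets $f(K)_{\mathbf{e}_1}$ and $f(K)_{\mathbf{e}_2}$ are bounded and $M_1,M_2,m_1,m_2$ are genuine real numbers; in particular the quantities in the statement are well defined. (Equivalently: if, say, $f(K)_{\mathbf{e}_1}$ were unbounded above, a sequence in $f(K)$ with first coordinates tending to $+\infty$ could have no $\mathbb{D}$-convergent subsequence, contradicting compactness.)

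The heart of the argument is carried out for $M_1$ and then repeated verbatim for $M_2,m_1,m_2$. By definition $M_1=\sup f(K)_{\mathbf{e}_1}$, so there is a real sequence $\{x_n\}\subseteq f(K)_{\mathbf{e}_1}$ with $x_n\to M_1$. For each $n$ choose $y_n\in\mathbb{R}$ with $x_n\mathbf{e}_1+y_n\mathbf{e}_2\in f(K)$. Since $f(K)$ is $\mathbb{D}$-compact, the sequence $\{x_n\mathbf{e}_1+y_n\mathbf{e}_2\}$ has a $\mathbb{D}$-convergent subsequence with limit $w\in f(K)$. Writing the $\mathbb{D}$-metric on $\mathbb{D}$ in components, $\mathbb{D}$-convergence forces convergence in each real coordinate; but the first coordinates along the subsequence form a subsequence of $\{x_n\}$ and hence converge to $M_1$, so by uniqueness of limits in $\mathbb{R}$ we get $p_1(w)=M_1$. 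Finally $w\in f(K)$ means $w=f(a)$ for some $a\in K$, and $p_1(f(a))=M_1$ as required.

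The same scheme produces $b$ from a sequence in $f(K)_{\mathbf{e}_2}$ tending to $M_2$, and $c,d$ from sequences in $f(K)_{\mathbf{e}_1}$, $f(K)_{\mathbf{e}_2}$ tending to the infima $m_1,m_2$; in each case the only change is which coordinate is controlled by the chosen real sequence and whether one invokes $\sup$ or $\inf$. The step I expect to require the most care is the passage from $\mathbb{D}$-convergence of $\{x_n\mathbf{e}_1+y_n\mathbf{e}_2\}$ to convergence of its first coordinates to $M_1$: one must use that $d_{\mathbb{D}}=d_1\mathbf{e}_1+d_2\mathbf{e}_2$ with $d_1,d_2$ real metrics and that $\alpha_k\to\alpha$ in the $\mathbb{D}$-sense is equivalent to $d_1(\alpha_k,\alpha)\to 0$ and $d_2(\alpha_k,\alpha)\to 0$ (the convergence analogue of Remark \ref{R2.3.1}), so that the uncontrolled coordinates $y_n$ never have to be handled directly. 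Everything else is routine bookkeeping with the definitions of $\sup$, $\inf$ and of $p_1,p_2$ on $\mathbb{D}$.
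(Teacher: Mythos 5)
Your proposal is correct and follows essentially the same route as the paper: decompose $\sup f(K)$ coordinatewise, pick a real sequence $x_n\to M_1$ in $f(K)_{\mathbf{e}_1}$, lift it to a sequence in $f(K)$, and use $\mathbb{D}$-compactness of $f(K)$ to extract a subsequence whose limit has first coordinate $M_1$. Your only deviation is a mild streamlining at the end — you conclude directly that the limit $w$ lies in $f(K)$ and write $w=f(a)$, whereas the paper pulls the subsequence back to $K$, extracts a further convergent subsequence there, and invokes continuity of $f$; both are valid, and your added observation that compactness guarantees $M_1,M_2,m_1,m_2$ are finite is a small point the paper leaves implicit.
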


\begin{remark}
Note that, in the above theorem, we may not be able to find points $a,b\in K$
such that $f(a)=\sup f(K)$ and $f(b)=\inf f(K).$

For example, take $K=\left[ 0,1\right] _{\mathbb{D}}\subset \mathbb{D}$
equipped with $\mathbb{D}$-metric as in Example \ref{Example 1}. Take $f:%
\left[ 0,1\right] _{\mathbb{D}}\rightarrow \mathbb{D},$ given by $f(z)=z_{1}%
\mathbf{e}_{1}+(1-z_{1})\mathbf{e}_{2},$ where $z=z_{1}\mathbf{e}_{1}+z_{2}%
\mathbf{e}_{2}.$

Then, clearly $f$ is $\mathbb{D}$-continuous and $K$ be a $\mathbb{D}$%
-compact subset.

Now, $\sup f(K)=\mathbf{e}_{1}+\mathbf{e}_{2}=1$ and $\inf f(K)=0.\mathbf{e}%
_{1}+0.\mathbf{e}_{2}=0,$ but there do not exists $a,b\in \left[ 0,1\right]
_{\mathbb{D}}$ such that $f(a)=1$ and $f(b)=0.$
\end{remark}

\begin{theorem}[Banach's Contraction Mapping Principle]
Let $(X,d_{\mathbb{D}})$ be $\mathbb{D}$-complete metric space and $%
T:X\rightarrow X$ be a $\mathbb{D}$-contraction mapping having $\mathbb{D}$%
-Lipschitz constant $k~(0\prec k\prec 1).$ Then $T$ has a unique fixed point 
$x_{0}$, i.e. $T\left( x_{0}\right) =x_{0},$ and for each $x\in X,$ $%
\lim\limits_{n\rightarrow \infty }T^{n}(x)=x_{0}.$ Moreover%
\begin{equation*}
d_{\mathbb{D}}(T^{n}(x),x_{0})\preceq \frac{k^{n}}{1-k}d_{\mathbb{D}%
}(x,T(x)).
\end{equation*}

\begin{proof}
Since $T$ has $\mathbb{D}$-Lipschitz constant $k,$ for each $x\in X,$%
\begin{equation*}
d_{\mathbb{D}}(T(x),T^{2}(x))\preceq kd_{\mathbb{D}}(x,T(x)).
\end{equation*}

Adding $d_{\mathbb{D}}(x,T(x))$ to both sides of the above gives%
\begin{equation*}
d_{\mathbb{D}}(x,T(x))+d_{\mathbb{D}}(T(x),T^{2}(x))\preceq d_{\mathbb{D}%
}(x,T(x))+kd_{\mathbb{D}}(x,T(x)).
\end{equation*}

Then we have,%
\begin{equation*}
d_{\mathbb{D}}(x,T(x))\preceq \frac{1}{1-k}\left( d_{\mathbb{D}}(x,T(x))-d_{%
\mathbb{D}}(T(x),T^{2}(x))\right) .
\end{equation*}

Now define the function $\phi _{\mathbb{D}}:X-\mathbb{D}^{+}$ by setting $%
\phi _{\mathbb{D}}(x)=\frac{1}{1-k}d_{\mathbb{D}}(x,T(x))$ for $x\in X.$

Thus 
\begin{equation*}
d_{\mathbb{D}}(x,T(x))\preceq \phi _{\mathbb{D}}(x)-\phi _{\mathbb{D}}(T(x)),%
\text{ }x\in X.
\end{equation*}

Therefore if $x\in X$ and $m,n\in 
\mathbb{N}
$ with $n<m,$%
\begin{equation*}
d_{\mathbb{D}}(T^{n}(x),T^{m+1}(x))\preceq \sum\limits_{i=n}^{m}d_{\mathbb{D}%
}(T^{i}(x),T^{i+1}(x))\preceq \phi _{\mathbb{D}}(T^{n}(x))-\phi _{\mathbb{D}%
}(T^{m+1}(x)).
\end{equation*}

In particular taking $n=1$ and letting $m\rightarrow \infty $ we have%
\begin{equation*}
\sum\limits_{i=1}^{\infty }d_{\mathbb{D}}(T^{i}(x),T^{i+1}(x))\preceq \phi _{%
\mathbb{D}}(T(x))\prec \infty _{\mathbb{D}}.
\end{equation*}

By proposition(\ref{P2.3.1}), $\{T^{n}(x)\}$ is a $\mathbb{D}$-cauchy
sequence.

Since $X$ is $\mathbb{D}$-complete, there exists $x_{0}\in X$ such that 
\begin{equation*}
\lim_{n\rightarrow \infty }T^{n}(x)=x_{0}.
\end{equation*}

Now using remark(\ref{R2.4.1}), we have%
\begin{equation*}
x_{0}=\lim_{n\rightarrow \infty }T^{n}(x)=\lim_{n\rightarrow \infty
}T^{n+1}(x)=T(x_{0}).
\end{equation*}

Thus $x_{0}$ is a fixed point of $T.$ In order to prove the uniqueness, let $%
y$ be another fixed point of $T.$ Then from the previous result we have%
\begin{equation*}
x_{0}=\lim_{n\rightarrow \infty }T^{n}(y)=y.
\end{equation*}

Now we have%
\begin{equation*}
d_{\mathbb{D}}(T^{n}(x),T^{m+1}(x))\preceq \phi _{\mathbb{D}}(T^{n}(x))-\phi
_{\mathbb{D}}(T^{m+1}(x)),
\end{equation*}

letting $m\rightarrow \infty ,$ we get%
\begin{equation*}
d_{\mathbb{D}}(T^{n}(x),x_{0})\preceq \phi _{\mathbb{D}}(T^{n}(x))=\frac{1}{%
1-k}d_{\mathbb{D}}(T^{n}(x),T^{n+1}(x)).
\end{equation*}

Since,%
\begin{equation*}
\frac{1}{1-k}d_{\mathbb{D}}(T^{n}(x),T^{n+1}(x))\preceq \frac{k^{n}}{1-k}d_{%
\mathbb{D}}(x,T(x)),
\end{equation*}

we have%
\begin{equation*}
d_{\mathbb{D}}(T^{n}(x),x_{0})\preceq \frac{k^{n}}{1-k}d_{\mathbb{D}%
}(x,T(x)).
\end{equation*}
\end{proof}
\end{theorem}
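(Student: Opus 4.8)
The plan is to reduce the $\mathbb{D}$-valued statement to two classical (real-valued) applications of the Banach Contraction Principle via the idempotent decomposition, which is the cleanest route given the machinery already set up in the paper. Write $d_{\mathbb{D}}(x,y)=d_1(x,y)\mathbf{e}_1+d_2(x,y)\mathbf{e}_2$ with $d_1,d_2$ genuine real metrics on $X$, and write the $\mathbb{D}$-Lipschitz constant as $k=k_1\mathbf{e}_1+k_2\mathbf{e}_2$ with $0<k_p<1$ for $p=1,2$. The contraction inequality $d_{\mathbb{D}}(Tx,Ty)\preceq k\,d_{\mathbb{D}}(x,y)$ then says exactly $d_p(Tx,Ty)\le k_p\,d_p(x,y)$ for $p=1,2$, so $T$ is an ordinary contraction on each of $(X,d_1)$ and $(X,d_2)$. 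By Remark \ref{R2.3.1} and Definition of $\mathbb{D}$-completeness, $(X,d_1)$ and $(X,d_2)$ are both complete real metric spaces. Hence the classical Banach theorem gives, for $p=1,2$, a unique $x_0^{(p)}\in X$ with $Tx_0^{(p)}=x_0^{(p)}$.

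The next step is to observe that $T$ has a genuine fixed point in $X$, i.e. that $x_0^{(1)}=x_0^{(2)}=:x_0$; this is immediate because a fixed point of $T$ is simultaneously a fixed point for the $d_1$- and $d_2$-contractions, and by uniqueness in each component it must equal both $x_0^{(1)}$ and $x_0^{(2)}$. Conversely $x_0^{(1)}$ is a point of $X$, hence also a $d_2$-fixed point of $T$, forcing $x_0^{(1)}=x_0^{(2)}$. This gives existence and uniqueness of the fixed point $x_0$ with $T(x_0)=x_0$. For the convergence $\lim_{n\to\infty}T^n(x)=x_0$, use Lemma (the sequential characterisation of $\mathbb{D}$-convergence): $T^n(x)\to x_0$ in $(X,d_{\mathbb{D}})$ iff $d_{\mathbb{D}}(T^n(x),x_0)\to 0$ iff $d_p(T^n(x),x_0)\to 0$ for $p=1,2$, and the latter is exactly the convergence statement of the classical Banach theorem applied in each component.

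Finally, for the error estimate, apply the classical a priori bound componentwise: for $p=1,2$ one has $d_p(T^n(x),x_0)\le \frac{k_p^{\,n}}{1-k_p}\,d_p(x,T(x))$. Multiplying by $\mathbf{e}_p$ and adding, and using that the idempotent basis is orthogonal so that products and powers are computed componentwise (in particular $k^n=k_1^n\mathbf{e}_1+k_2^n\mathbf{e}_2$ and $\frac{1}{1-k}=\frac{1}{1-k_1}\mathbf{e}_1+\frac{1}{1-k_2}\mathbf{e}_2$), we obtain
\begin{equation*}
d_{\mathbb{D}}(T^n(x),x_0)\preceq \frac{k^n}{1-k}\,d_{\mathbb{D}}(x,T(x)),
\end{equation*}
since $\preceq$ on $\mathbb{D}$ is precisely the conjunction of $\le$ in the two real components. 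Alternatively, one could mirror the real proof directly in $\mathbb{D}$: set $\phi_{\mathbb{D}}(x)=\frac{1}{1-k}d_{\mathbb{D}}(x,T(x))$, telescope to get $\sum_i d_{\mathbb{D}}(T^i(x),T^{i+1}(x))\preceq \phi_{\mathbb{D}}(T(x))\prec\infty_{\mathbb{D}}$, invoke Proposition \ref{P2.3.1} to conclude $\{T^n(x)\}$ is $\mathbb{D}$-Cauchy, then use $\mathbb{D}$-completeness and $\mathbb{D}$-continuity of $T$ (Remark \ref{R2.4.1}) to pass to the limit. The only genuine subtlety — and the one place to be careful — is the handling of the order relation $\preceq$ under division by $1-k$ and under the infinite sum: one must check that $1-k\in\mathbb{D}^+$ (true since $0\prec k\prec 1$ means $k_p\in(0,1)$), that its reciprocal lies in $\mathbb{D}^+$, and that $\preceq$ is preserved under multiplication by elements of $\mathbb{D}_0^+$; all of these follow from componentwise positivity, but they are the steps where a naive "copy the real proof" argument could silently go wrong, so I would state them explicitly.
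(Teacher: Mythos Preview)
Your primary approach---reducing to two real contractions via the idempotent decomposition---contains a gap at the step where you assert that ``$(X,d_1)$ and $(X,d_2)$ are both complete real metric spaces.'' This does not follow from $\mathbb{D}$-completeness together with Remark~\ref{R2.3.1}. That remark only tells you that a sequence is $\mathbb{D}$-Cauchy iff it is \emph{simultaneously} $d_1$- and $d_2$-Cauchy; a sequence that is $d_1$-Cauchy but not $d_2$-Cauchy is not $\mathbb{D}$-Cauchy, so $\mathbb{D}$-completeness says nothing about it. Concretely, take $X=\mathbb{Q}$ with $d_1$ the usual metric and $d_2$ the discrete metric: every $\mathbb{D}$-Cauchy sequence is eventually constant (forced by the $d_2$-Cauchy condition), so $(X,d_{\mathbb{D}})$ is $\mathbb{D}$-complete, yet $(X,d_1)$ is certainly not complete. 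Hence you cannot black-box the classical Banach theorem on each component space separately.

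The repair is easy and does not force you to abandon the componentwise strategy: for the particular sequence $\{T^n(x)\}$, the standard iteration estimate $d_p(T^{n}(x),T^{n+1}(x))\le k_p^{\,n}d_p(x,T(x))$ shows it is $d_p$-Cauchy for \emph{both} $p=1,2$ at once, hence $\mathbb{D}$-Cauchy, and now $\mathbb{D}$-completeness yields a limit $x_0\in X$ which is automatically a $d_p$-limit for each $p$. After this adjustment your argument for $T(x_0)=x_0$, uniqueness, and the componentwise error bound goes through verbatim. The alternative route you sketch at the end---defining $\phi_{\mathbb{D}}(x)=\frac{1}{1-k}d_{\mathbb{D}}(x,T(x))$, telescoping, invoking Proposition~\ref{P2.3.1}, and then using $\mathbb{D}$-completeness and Remark~\ref{R2.4.1}---is exactly the paper's proof and is correct as stated; the paper works directly in the ordered ring $\mathbb{D}$ rather than projecting to real components.
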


Now a natural question arises from the previous theorem. If we replaces the
sequence $\{T^{n}(x)\}$ with $\{y_{n}\}$ where $y_{0}=x$ and $y_{n+1}$ is
approximately $T(y_{n}),$ then under what condition it still be the case
that $\lim\limits_{n\rightarrow \infty }y_{n}=x_{0}?$ The answer of this
question is given in the next theorem. We only give the statement of the
theorem as it can be proved using the idempotent decomposition of hyperbolic
number in the same lines as in [ \cite{kh}, Theorem $3.2$]

\begin{theorem}
Let $(X,d_{\mathbb{D}})$ be $\mathbb{D}$-complete metric space, let $%
T:X\rightarrow X$ be a $\mathbb{D}$-contraction mapping with $\mathbb{D}$%
-Lipschitz constant $k$ and suppose $x_{0}\in X$ is the fixed point of $T.$
Let $\{\epsilon _{\mathbb{D}n}\}$ be a sequence in $\mathbb{D}^{+}$ for
which $\lim\limits_{n\rightarrow \infty }\epsilon _{\mathbb{D}n}=0,$ let $%
y_{0}\in X$ and suppose $\{y_{n}\}\subseteq X$ satisfying $d_{\mathbb{D}%
}(y_{n+1},T(y_{n}))\preceq \epsilon _{\mathbb{D}n}.$ Then 
\begin{equation*}
\lim\limits_{n\rightarrow \infty }y_{n}=x_{0}.
\end{equation*}
\end{theorem}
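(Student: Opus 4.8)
The plan is to reduce everything to the two real component metrics via the idempotent decomposition, exactly as suggested by the sentence preceding the theorem. Write $d_{\mathbb{D}}(x,y)=d_1(x,y)\mathbf{e}_1+d_2(x,y)\mathbf{e}_2$ where $d_1,d_2$ are real metrics on $X$, and write $k=k_1\mathbf{e}_1+k_2\mathbf{e}_2$ with $0<k_p<1$ for $p=1,2$ (this follows from $0\prec k\prec 1$). Also decompose $\epsilon_{\mathbb{D}n}=\epsilon_{1,n}\mathbf{e}_1+\epsilon_{2,n}\mathbf{e}_2$; by Remark~\ref{R2.3.1}-style reasoning together with Lemma~\ref{...} (convergence componentwise), $\lim_{n\to\infty}\epsilon_{\mathbb{D}n}=0$ is equivalent to $\lim_{n\to\infty}\epsilon_{p,n}=0$ for each $p=1,2$. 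Likewise $T:X\to X$ being a $\mathbb{D}$-contraction with constant $k$ means precisely that $d_p(Tx,Ty)\le k_p\,d_p(x,y)$ for all $x,y\in X$ and $p=1,2$, so $T$ is an ordinary contraction on each real metric space $(X,d_p)$; and $(X,d_{\mathbb{D}})$ being $\mathbb{D}$-complete means, by Remark~\ref{R2.3.1}, that each $(X,d_p)$ is a complete real metric space. Finally, the condition $d_{\mathbb{D}}(y_{n+1},T(y_n))\preceq\epsilon_{\mathbb{D}n}$ unpacks to $d_p(y_{n+1},T(y_n))\le\epsilon_{p,n}$ for $p=1,2$.

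Once all data are decomposed, the theorem splits into two independent real statements: for each $p\in\{1,2\}$, in the complete metric space $(X,d_p)$ with contraction $T$ of constant $k_p$ and fixed point $x_0$, a sequence $\{y_n\}$ with $d_p(y_{n+1},T(y_n))\le\epsilon_{p,n}\to 0$ satisfies $y_n\to x_0$ in $d_p$. This is exactly the classical ``approximate iteration'' result, and I would cite [\cite{kh}, Theorem~3.2] for each component (as the paper already proposes). Having established $d_p(y_n,x_0)\to 0$ for $p=1,2$, I would invoke the componentwise characterization of $\mathbb{D}$-convergence (Lemma on $\lim d_{\mathbb{D}}(x_n,\alpha)=0$, together with Remark~\ref{R2.3.1}) to conclude $\lim_{n\to\infty}y_n=x_0$ in $(X,d_{\mathbb{D}})$.

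For completeness I would sketch the one-component real estimate rather than merely cite it, since it is short: from $d_p(y_{n+1},x_0)\le d_p(y_{n+1},T(y_n))+d_p(T(y_n),T(x_0))\le\epsilon_{p,n}+k_p\,d_p(y_n,x_0)$, iterate to get $d_p(y_n,x_0)\le k_p^{\,n}d_p(y_0,x_0)+\sum_{j=0}^{n-1}k_p^{\,n-1-j}\epsilon_{p,j}$. The first term tends to $0$ since $0<k_p<1$; the second term tends to $0$ because it is a weighted (Cesàro-type) average of a null sequence against the summable weights $k_p^{\,m}$ — concretely, given $\delta>0$ choose $M$ with $\epsilon_{p,j}<\delta$ for $j\ge M$, split the sum at $M$, bound the tail by $\delta\sum_{m\ge0}k_p^{\,m}=\delta/(1-k_p)$ and note the finite head is multiplied by $k_p^{\,n-1-M}\to0$.

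The only real obstacle is bookkeeping: making sure the several ``iff'' passages between the hyperbolic objects and their two real shadows are each justified by an already-stated lemma or remark (componentwise convergence, Remark~\ref{R2.3.1} for Cauchy/completeness, the order relation $\prec,\preceq$ acting coordinatewise, and the coordinatewise behaviour of $k$), so that no hidden hypothesis about zero divisors or the partial order sneaks in. There is no analytic difficulty beyond the standard contraction estimate; the content is entirely in the reduction.
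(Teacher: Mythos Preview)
Your proposal is correct and follows precisely the approach the paper indicates: the paper does not give a detailed proof but states that the result ``can be proved using the idempotent decomposition of hyperbolic number in the same lines as in [\cite{kh}, Theorem~3.2]'', which is exactly your reduction to the two real component metrics followed by the classical approximate-iteration estimate. Your added sketch of the one-component bound $d_p(y_n,x_0)\le k_p^{\,n}d_p(y_0,x_0)+\sum_{j=0}^{n-1}k_p^{\,n-1-j}\epsilon_{p,j}$ simply makes explicit what the paper leaves to the citation.
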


\begin{theorem}
Suppose $(X,d_{\mathbb{D}})$ be $\mathbb{D}$-complete metric space and $%
T:X\rightarrow X$ be a mapping for which $T^{N}$ is a $\mathbb{D}$%
-contraction mapping for some positive integer $N.$ Then $T$ has a unique
fixed point.

\begin{proof}
Proof is along the similar lines as in [ \cite{kh}, Theorem $3.3$]
\end{proof}
\end{theorem}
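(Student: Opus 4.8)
The plan is to reduce the statement to the ordinary Banach fixed point theorem (our hyperbolic version, already proved above) applied to the contraction $T^{N}$. First I would invoke the hypothesis that $T^{N}:X\to X$ is a $\mathbb{D}$-contraction on the $\mathbb{D}$-complete space $(X,d_{\mathbb{D}})$; by the $\mathbb{D}$-Banach contraction principle proved earlier, $T^{N}$ has a unique fixed point $x_{0}\in X$, i.e. $T^{N}(x_{0})=x_{0}$.

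Next I would show $x_{0}$ is a fixed point of $T$ itself. Apply $T$ to both sides of $T^{N}(x_{0})=x_{0}$: since $T$ commutes with its own iterates, $T^{N}(T(x_{0}))=T(T^{N}(x_{0}))=T(x_{0})$, so $T(x_{0})$ is also a fixed point of $T^{N}$. By the uniqueness of the fixed point of $T^{N}$, we get $T(x_{0})=x_{0}$, so $x_{0}$ is a fixed point of $T$.

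For uniqueness, suppose $y\in X$ satisfies $T(y)=y$. Then iterating gives $T^{N}(y)=y$, so $y$ is a fixed point of $T^{N}$; by uniqueness of the latter, $y=x_{0}$. Hence $T$ has exactly one fixed point.

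The only mildly delicate point — and the one the phrase ``along similar lines'' is glossing over — is that all of this takes place with the partial order $\preceq$ on $\mathbb{D}$ rather than $\mathbb{R}$, but nothing here actually uses the order: the argument is purely about commuting maps and uniqueness of fixed points, so it transfers verbatim. If one instead wanted a self-contained proof mirroring [\cite{kh}, Theorem $3.3$] without quoting our own Banach principle, the main obstacle would be re-deriving the Cauchy estimate for $\{T^{n}(x)\}$; one would split $n = qN + r$ with $0\le r<N$, apply the contraction estimate for $T^{N}$ to the subsequences $\{T^{qN+r}(x)\}_{q}$ for each fixed $r$, and then use that $d_{\mathbb{D}}(T^{qN}(x), T^{qN}(T^{r}(x))) \preceq k^{q} d_{\mathbb{D}}(x, T^{r}(x))$ together with the triangle inequality to control the full sequence — but using the already-established $\mathbb{D}$-Banach principle as above avoids this entirely.
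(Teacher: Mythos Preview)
Your proof is correct and is precisely the standard argument from [\cite{kh}, Theorem~3.3] that the paper is deferring to: apply the already-established $\mathbb{D}$-Banach principle to $T^{N}$, then use commutativity of iterates and uniqueness to conclude that the fixed point of $T^{N}$ is also the unique fixed point of $T$. The paper offers nothing beyond the citation, so your write-up is in fact more detailed than what the paper provides.
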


The next result shows that a $\mathbb{D}$-contractive mapping can also have
a fixed point in a $\mathbb{D}$-compact subspace of $\mathbb{D}$, induced
with $\mathbb{D}$-metric $d_{\mathbb{D}}$ as in Example \ref{Example 1}.

\begin{theorem}
Let $(X,d_{\mathbb{D}})$ be a $\mathbb{D}$-compact metric space such that $%
X\subseteq \mathbb{D}$ and $T:X\rightarrow X$ be a $\mathbb{D}$-contractive
mapping. Then $T$ has a unique fixed point $x_{0},$ and moreover, for each $%
x\in X,$ $\lim\limits_{n\rightarrow \infty }T^{n}(x)=x_{0}.$

\begin{proof}
Let us consider the mapping $\psi _{\mathbb{D}}:X\rightarrow \mathbb{D}^{+}$
given by%
\begin{equation*}
\psi _{\mathbb{D}}(x)=d_{\mathbb{D}}(x,T(x)),\text{ }x\in X.
\end{equation*}

Let $\psi _{\mathbb{D}}(x)=\psi _{1}(x)\mathbf{e}_{1}+\psi _{2}(x)\mathbf{e}%
_{2}.$ Then $\psi _{i}:X\rightarrow R^{+}$ and $\psi _{i}(x)=d_{i}(x,T(x)),$ 
$x\in X,$ for every $i=1,2.$

Since $d_{\mathbb{D}}=d_{1}\mathbf{e}_{1}+d_{2}\mathbf{e}_{2}$ is $\mathbb{D}
$-contractive, each $d_{i}$ is contractive mapping on $X.$

Now for each $i=1,2,$ $\psi _{i}$ is continuous and bounded below, so $\psi
_{i}$ assumes its minimum value at some point $y_{i0}\in X.$ Since $%
y_{i0}\neq T(y_{i0})$ implies%
\begin{equation*}
\psi
_{i}(T(y_{i0}))=d_{i}(T(y_{i0}),T^{2}(y_{i0}))<d_{i}(y_{i0}T(y_{i0}))=\psi
_{i}(y_{i0}),
\end{equation*}%
which is a contradiction. Hence $y_{i0}=T(y_{i0})$ and so $y_{i0}$ is a
solution of $\psi _{i}(x)=0.$

Take%
\begin{equation*}
x_{i0}=\min \{p_{i}(x):\psi _{i}(x)=0\},\text{ }i=1,2.
\end{equation*}

Now putting $x_{0}=x_{10}\mathbf{e}_{1}+x_{20}\mathbf{e}_{2},$ we get $\psi
_{\mathbb{D}}(x_{0})=0.$

Hence $T(x_{0})=x_{0}.$

Let $y_{0}$ be another fixed point of $T.$ Then $T(y_{0})=y_{0}.$ Hence 
\begin{equation*}
d_{\mathbb{D}}(x_{0},y_{0})=d_{\mathbb{D}}(T(x_{0}),T(y_{0}))\prec d_{%
\mathbb{D}}(x_{0},y_{0}),
\end{equation*}%
which is a contradiction.

Hence $T$ has unique fixed point.

Now let $x\in X$ and consider the sequence $\{d_{\mathbb{D}%
}(T^{n}(x),x_{0})\}.$ If $T^{n}(x)\neq x_{0},$%
\begin{equation*}
d_{\mathbb{D}}(T^{n+1}(x),x_{0})=d_{\mathbb{D}}(T^{n+1}(x),T(x_{0}))\prec d_{%
\mathbb{D}}(T^{n}(x),x_{0}),
\end{equation*}

so $\{d_{\mathbb{D}}(T^{n}(x),x_{0})\}$ is strictly decreasing. Consequently
the limit%
\begin{equation*}
r=\lim\limits_{n\rightarrow \infty }d_{\mathbb{D}}(T^{n}(x),x_{0})
\end{equation*}

exists and $r\in \mathbb{D}^{+}.$ Also since $X$ is compact, the sequence $%
\{T^{n}(x)\}$ has a $\mathbb{D}$-convergent subsequence $\{T^{n_{k}}(x)\},$
say $\lim\limits_{k\rightarrow \infty }T^{n_{k}}(x)=z\in X.$ Since $%
\{T^{n}(x)\}$ is decreasing,%
\begin{equation*}
r=d_{\mathbb{D}}(z,x_{0})=\lim\limits_{k\rightarrow \infty }d_{\mathbb{D}%
}(T^{n_{k}}(x),x_{0})=\lim\limits_{k\rightarrow \infty }d_{\mathbb{D}%
}(T^{n_{k}+1}(x),x_{0})=\lim\limits_{k\rightarrow \infty }d_{\mathbb{D}%
}(T(z),x_{0}).
\end{equation*}

But if $z\neq x_{0},$ then $d_{\mathbb{D}}(T(z),x_{0})=d_{\mathbb{D}%
}(T(z),T(x_{0}))\prec d_{\mathbb{D}}(z,x_{0}).$ This proves that any $%
\mathbb{D}$-convergent subsequence of $\{T^{n}(x)\}$ $\mathbb{D}$-converges
to $x_{0},$ so it must be the case that $\lim\limits_{n\rightarrow \infty
}T^{n}(x)=x_{0}.$
\end{proof}
\end{theorem}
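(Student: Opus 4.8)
The plan is to reduce the $\mathbb{D}$-valued problem to an ordinary real-analytic one by passing to the idempotent components, and then run the classical Edelstein-type argument for contractive maps on compact spaces. First I would write $d_{\mathbb{D}}(x,y)=d_{1}(x,y)\mathbf{e}_{1}+d_{2}(x,y)\mathbf{e}_{2}$ with $d_{1},d_{2}$ the component metrics on $X$, and record two translations of the hypotheses. (a) $\mathbb{D}$-contractivity of $T$ says precisely that $d_{i}(Tx,Ty)<d_{i}(x,y)$ for $i=1,2$ whenever $x\neq y$. (b) A sequence $\mathbb{D}$-converges iff it converges in both $d_{1}$ and $d_{2}$; hence $\rho:=d_{1}+d_{2}$ is a genuine real metric on $X$ for which $(X,\rho)$ is sequentially compact (every sequence has, by $\mathbb{D}$-compactness, a $\mathbb{D}$-convergent subsequence, which is the same as a $\rho$-convergent one, with limit in $X$), and for which $T$ is $\rho$-continuous (immediate from (a)). I note here that the hypothesis $X\subseteq\mathbb{D}$ is not actually needed for the argument below; it enters only if one prefers to argue componentwise through the projections $p_{1},p_{2}$.

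\emph{Existence.} Consider the real-valued function $g(x)=d_{1}(x,Tx)+d_{2}(x,Tx)$, that is, $g=p_{1}\circ\psi_{\mathbb{D}}+p_{2}\circ\psi_{\mathbb{D}}$ with $\psi_{\mathbb{D}}(x)=d_{\mathbb{D}}(x,Tx)$. It is $\rho$-continuous and bounded below by $0$, so it attains its minimum at some $x_{0}\in X$ by sequential compactness of $(X,\rho)$. If $x_{0}\neq Tx_{0}$, then applying (a) to the pair $(x_{0},Tx_{0})$ gives $d_{i}(Tx_{0},T^{2}x_{0})<d_{i}(x_{0},Tx_{0})$ for $i=1,2$, hence $g(Tx_{0})<g(x_{0})$, contradicting minimality. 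Therefore $d_{1}(x_{0},Tx_{0})=d_{2}(x_{0},Tx_{0})=0$, so $d_{\mathbb{D}}(x_{0},Tx_{0})=0$ and $Tx_{0}=x_{0}$.

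\emph{Uniqueness and convergence of iterates.} Uniqueness is immediate from the strict inequality: two distinct fixed points $x_{0},y_{0}$ would give $d_{\mathbb{D}}(x_{0},y_{0})=d_{\mathbb{D}}(Tx_{0},Ty_{0})\prec d_{\mathbb{D}}(x_{0},y_{0})$, which is impossible. For the convergence statement, fix $x\in X$ and set $a_{n}=\rho(T^{n}x,x_{0})$. Summing the two inequalities in (a) shows $T$ is $\rho$-contractive, so whenever $T^{n}x\neq x_{0}$ we get $a_{n+1}=\rho(T^{n+1}x,Tx_{0})<\rho(T^{n}x,x_{0})=a_{n}$, while $a_{n}=0$ once reached persists. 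Thus $\{a_{n}\}$ is nonincreasing and bounded below, hence $a_{n}\to r$ for some $r\geq 0$. By sequential compactness, $\{T^{n}x\}$ has a subsequence $T^{n_{k}}x\to z\in X$ in $\rho$; continuity of $\rho$ and of $T$ then gives $r=\rho(z,x_{0})$ and $r=\lim_{k}\rho(T^{n_{k}+1}x,x_{0})=\rho(Tz,x_{0})$. If $z\neq x_{0}$ this yields $\rho(Tz,x_{0})=\rho(Tz,Tx_{0})<\rho(z,x_{0})$, i.e. $r<r$; hence $z=x_{0}$ and $r=0$. A monotone sequence with a subsequence tending to $0$ tends to $0$, so $\rho(T^{n}x,x_{0})\to 0$, i.e. $T^{n}x\to x_{0}$ in the $\mathbb{D}$-metric.

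\emph{Expected main obstacle.} The only point requiring real care is the passage to a scalar minimization problem: $\psi_{\mathbb{D}}$ takes values in $\mathbb{D}_{0}^{+}$ and $\preceq$ is only a partial order, so ``$\psi_{\mathbb{D}}$ attains a minimum'' is not meaningful as stated. Collapsing the two idempotent components into the honest real function $g$, and in parallel replacing $d_{\mathbb{D}}$ by the honest real metric $\rho=d_{1}+d_{2}$ so that compactness and continuity are all available in a single topology, is the device that makes the Edelstein argument go through; everything after that is the standard compact–contractive fixed-point argument carried out with $\rho$.
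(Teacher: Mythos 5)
Your proof is correct, and it takes a genuinely different (and cleaner) route than the paper's at the one delicate point, namely the existence of a fixed point. The paper also decomposes $d_{\mathbb{D}}=d_{1}\mathbf{e}_{1}+d_{2}\mathbf{e}_{2}$ and minimizes each component function $\psi_{i}(x)=d_{i}(x,Tx)$ separately, obtaining two possibly different minimizers $y_{10},y_{20}$; it then uses the hypothesis $X\subseteq\mathbb{D}$ to recombine them, setting $x_{i0}=\min\{p_{i}(x):\psi_{i}(x)=0\}$ and $x_{0}=x_{10}\mathbf{e}_{1}+x_{20}\mathbf{e}_{2}$, and asserts $\psi_{\mathbb{D}}(x_{0})=0$. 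That recombination is the weakest step of the published argument: it is not justified there that $x_{0}$ lies in $X$, nor that both components of $\psi_{\mathbb{D}}$ vanish at this recombined point, since $T$ is not assumed to act componentwise. Your device of scalarizing first --- minimizing the single real function $g=\psi_{1}+\psi_{2}$ on the sequentially compact space $(X,\rho)$ with $\rho=d_{1}+d_{2}$ --- produces one point at which both components vanish simultaneously, so no recombination is needed; as you correctly observe, this also makes the hypothesis $X\subseteq\mathbb{D}$ superfluous, so your argument is strictly more general. The uniqueness and iterate-convergence parts are the same Edelstein-type argument in both proofs; you run it with the real metric $\rho$, while the paper runs it with the $\mathbb{D}$-valued distance and implicitly uses componentwise monotone convergence to get the limit $r$. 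The only cosmetic caveat is that ``$\mathbb{D}$-contractive'' must be read as holding for $x\neq y$ (otherwise the strict inequality is vacuous at $x=y$), which you and the paper both tacitly assume.
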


Our next task is to construct a $\mathbb{D}$-metric on the space of $\mathbb{%
D}$-continuous functions. To proceed further, first we consider the space of 
$\mathbb{D}$-bounded functions.

Let $(X,d_{\mathbb{D}})$ and $(Y,\rho _{\mathbb{D}})$ be two $\mathbb{D}$%
-metric spaces. A function $f:X\rightarrow Y$ is $\mathbb{D}$-bounded if the
range set $f(X)=\{f(x):x\in X\}\subset Y$ is $\mathbb{D}$-bounded with
respect to the $\mathbb{D}$-metric $\rho _{\mathbb{D}},$ i.e. for all $%
u,v\in X,$ there exists $M\in \mathbb{D}^{+},$ such that $\rho _{\mathbb{D}%
}(f(u),f(v))\preceq M.$

Let%
\begin{equation*}
B_{\mathbb{D}}(X,Y)=\{f:X\rightarrow Y~|~f\text{ is }\mathbb{D}-\text{bounded%
}\}
\end{equation*}%
be the collection of all $\mathbb{D}$-bounded functions from $X$ to $Y.$ We
shall turn $B_{\mathbb{D}}(X,Y)$ into a $\mathbb{D}$-metric space by
introducing a $\mathbb{D}$-metric $\sigma _{\mathbb{D}}.$

\begin{proposition}
Let$~(X,d_{\mathbb{D}})$ and $(Y,\rho _{\mathbb{D}})$ be two $\mathbb{D}$%
-metric spaces. Then 
\begin{equation*}
\sigma _{\mathbb{D}}(f,g)=\sup \{\rho _{\mathbb{D}}(f(x),g(x))~|~x\in X\}
\end{equation*}%
defines a $\mathbb{D}$-metric $\sigma _{\mathbb{D}}$ on $B_{\mathbb{D}%
}(X,Y). $

\begin{proof}
It is clear form the definition $\sigma _{\mathbb{D}}(f,g)\succeq 0$ and $%
\sigma _{\mathbb{D}}(f,g)=0$ if and only if $f=g.$

Also $\sigma _{\mathbb{D}}(f,g)=\sigma _{\mathbb{D}}(g,f),$ for all $f,g\in
B_{\mathbb{D}}(X,Y)$

Again for all $x\in X,$%
\begin{equation*}
\rho _{\mathbb{D}}(f(x),g(x))\preceq \rho _{\mathbb{D}}(f(x),h(x))+\rho _{%
\mathbb{D}}(h(x),g(x))\preceq \sigma _{\mathbb{D}}(f,h)+\sigma _{\mathbb{D}%
}(h,g),
\end{equation*}%
and taking supremum over all $x\in X,$ we get%
\begin{equation*}
\sigma _{\mathbb{D}}(f,g)\preceq \sigma _{\mathbb{D}}(f,h)+\sigma _{\mathbb{D%
}}(h,g).
\end{equation*}

Thus, $\sigma _{\mathbb{D}}$ defines a $\mathbb{D}$-metric on $B_{\mathbb{D}%
}(X,Y).$\bigskip
\end{proof}
\end{proposition}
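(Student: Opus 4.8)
The plan is to prove well-definedness first and only then run through the three axioms of a $\mathbb{D}$-metric, since well-definedness is where the $\mathbb{D}$-boundedness hypothesis actually does work. Write $\rho_{\mathbb{D}}=\rho_{1}\mathbf{e}_{1}+\rho_{2}\mathbf{e}_{2}$ with $\rho_{1},\rho_{2}$ real metrics on $Y$. For $f,g\in B_{\mathbb{D}}(X,Y)$ put $S_{f,g}=\{\rho_{\mathbb{D}}(f(x),g(x)):x\in X\}$; because $\sup$ of a subset of $\mathbb{D}$ is taken componentwise, $\sigma_{\mathbb{D}}(f,g)=\sup (S_{f,g})_{\mathbf{e}_{1}}\mathbf{e}_{1}+\sup (S_{f,g})_{\mathbf{e}_{2}}\mathbf{e}_{2}$, so what I must check is that each real set $\{\rho_{i}(f(x),g(x)):x\in X\}$ is bounded above in $\mathbb{R}$. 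To see this, fix $x_{0}\in X$ and use the triangle inequality for $\rho_{\mathbb{D}}$,
\[
\rho_{\mathbb{D}}(f(x),g(x))\preceq \rho_{\mathbb{D}}(f(x),f(x_{0}))+\rho_{\mathbb{D}}(f(x_{0}),g(x_{0}))+\rho_{\mathbb{D}}(g(x_{0}),g(x)),
\]
and bound the first and third summands by the $\mathbb{D}$-bounds $M_{f},M_{g}\in\mathbb{D}$ of the sets $f(X)$ and $g(X)$; then $S_{f,g}$ is bounded above by the single element $M_{f}+\rho_{\mathbb{D}}(f(x_{0}),g(x_{0}))+M_{g}$, and reading off components gives the required real boundedness. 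Hence $\sigma_{\mathbb{D}}(f,g)$ exists, and since every member of $S_{f,g}$ lies in $\mathbb{D}_{0}^{+}$ (nonnegative components), so does its supremum, i.e. $\sigma_{\mathbb{D}}(f,g)\in\mathbb{D}_{0}^{+}$.

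For property (i), $\sigma_{\mathbb{D}}(f,g)\succeq 0$ is immediate; if $f=g$ every term of $S_{f,g}$ is $0$, so the supremum is $0$; conversely $\sigma_{\mathbb{D}}(f,g)=0$ forces $\sup_{x}\rho_{i}(f(x),g(x))=0$ for $i=1,2$, hence (a supremum of nonnegative reals being $0$ makes each term $0$) $\rho_{i}(f(x),g(x))=0$ for all $x$ and both $i$, so $\rho_{\mathbb{D}}(f(x),g(x))=0$ for every $x$, so $f(x)=g(x)$ for all $x$ by property (i) for $\rho_{\mathbb{D}}$. Property (ii) is immediate from $\rho_{\mathbb{D}}(f(x),g(x))=\rho_{\mathbb{D}}(g(x),f(x))$ for each $x$, which gives $S_{f,g}=S_{g,f}$ and hence equal suprema.

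For property (iii) the point I would make explicit is the monotonicity of the componentwise supremum: if $a\preceq c$ for every $a$ in a set $A\subset\mathbb{D}$, then $\sup A\preceq c$ (check each component separately). In particular each term of $S$ is $\preceq$ its supremum, so for $f,g,h\in B_{\mathbb{D}}(X,Y)$ and any $x\in X$ the triangle inequality for $\rho_{\mathbb{D}}$ gives
\[
\rho_{\mathbb{D}}(f(x),g(x))\preceq \rho_{\mathbb{D}}(f(x),h(x))+\rho_{\mathbb{D}}(h(x),g(x))\preceq \sigma_{\mathbb{D}}(f,h)+\sigma_{\mathbb{D}}(h,g).
\]
As $x$ was arbitrary, the right-hand side is an upper bound of $S_{f,g}$, so by monotonicity $\sigma_{\mathbb{D}}(f,g)=\sup S_{f,g}\preceq \sigma_{\mathbb{D}}(f,h)+\sigma_{\mathbb{D}}(h,g)$, which is property (iii).

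A slicker alternative, if one prefers to avoid repeating the classical argument, is to observe at the outset that $\sigma_{\mathbb{D}}(f,g)=\sigma_{1}(f,g)\mathbf{e}_{1}+\sigma_{2}(f,g)\mathbf{e}_{2}$ where $\sigma_{i}(f,g)=\sup_{x\in X}\rho_{i}(f(x),g(x))$ is the ordinary uniform metric on the (real) space of $\rho_{i}$-bounded functions $X\to Y$; $\mathbb{D}$-boundedness of $f$ forces each component of $f$ to be $\rho_{i}$-bounded, so $\sigma_{1},\sigma_{2}$ are finite genuine metrics, and the three $\mathbb{D}$-metric properties of $\sigma_{\mathbb{D}}$ then reduce componentwise to the textbook metric axioms for $\sigma_{1}$ and $\sigma_{2}$. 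I expect the only genuine obstacle to be the well-definedness step in the first paragraph; everything after that is a routine transfer of the classical sup-metric verification through the idempotent decomposition.
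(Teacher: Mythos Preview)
Your proof is correct and, for the verification of the three $\mathbb{D}$-metric axioms, follows essentially the same route as the paper: nonnegativity and symmetry are declared immediate, and the triangle inequality is obtained by applying the triangle inequality for $\rho_{\mathbb{D}}$ pointwise, bounding each summand by the corresponding supremum, and then passing to the supremum on the left.

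The one substantive difference is that you first prove well-definedness of $\sigma_{\mathbb{D}}$ (finiteness of the componentwise suprema via the $\mathbb{D}$-boundedness of $f$ and $g$), whereas the paper's proof is silent on this point and simply writes down the supremum. Your extra paragraph is a genuine addition: it is precisely where the hypothesis $f,g\in B_{\mathbb{D}}(X,Y)$ is used, and without it $\sigma_{\mathbb{D}}$ need not take values in $\mathbb{D}_{0}^{+}$. Your alternative componentwise argument (reducing to the classical sup-metric on each idempotent factor) is also valid and arguably cleaner, but again goes beyond what the paper records.
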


\begin{theorem}
\label{t comp} Let$\ (X,d_{\mathbb{D}})$ and $(Y,\rho _{\mathbb{D}})$ be two 
$\mathbb{D}$-metric spaces and assume that $(Y,\rho _{\mathbb{D}})$ is $%
\mathbb{D}$-complete. Then $\left( B_{\mathbb{D}}(X,Y),\sigma _{\mathbb{D}%
}\right) $ is also $\mathbb{D}$-complete.
\end{theorem}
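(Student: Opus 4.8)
The plan is to reduce the statement to the classical fact that the space of bounded functions into a complete metric space is complete, by exploiting the idempotent decomposition that has been used repeatedly in the paper. First I would write $\rho_{\mathbb{D}} = \rho_1 \mathbf{e}_1 + \rho_2 \mathbf{e}_2$, where $\rho_1,\rho_2$ are ordinary real metrics on $Y$ (as noted after the definition of a $\mathbb{D}$-metric), and observe that since $(Y,\rho_{\mathbb{D}})$ is $\mathbb{D}$-complete, Remark~\ref{R2.3.1} shows $(Y,\rho_1)$ and $(Y,\rho_2)$ are each complete real metric spaces. Likewise I would record that $\sigma_{\mathbb{D}}(f,g) = \sigma_1(f,g)\mathbf{e}_1 + \sigma_2(f,g)\mathbf{e}_2$, where $\sigma_p(f,g) = \sup\{\rho_p(f(x),g(x)) : x \in X\}$ is the usual sup-metric on bounded functions (the supremum splits componentwise by the definition of $\sup$ on subsets of $\mathbb{D}$ given in the preliminaries). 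I would also note that $f \in B_{\mathbb{D}}(X,Y)$ forces each component to have bounded range with respect to $\rho_p$, so $f$ lies in the real bounded-function spaces $B(X,(Y,\rho_p))$ for $p=1,2$.

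Next I would take an arbitrary $\mathbb{D}$-Cauchy sequence $\{f_n\}$ in $\left(B_{\mathbb{D}}(X,Y),\sigma_{\mathbb{D}}\right)$. By Remark~\ref{R2.3.1}, $\{f_n\}$ is then a Cauchy sequence in each real metric space $\left(B(X,(Y,\rho_p)),\sigma_p\right)$ for $p=1,2$. Since $(Y,\rho_p)$ is complete, the classical theorem on uniform convergence (the real version of exactly this statement) gives that $\{f_n\}$ converges in $\sigma_p$ to some bounded function $g^{(p)}: X \to Y$. The candidate limit is then the function $g: X \to Y$ determined by the pair $(g^{(1)},g^{(2)})$: more precisely, for each $x$, $g(x)$ is the point of $Y$ whose idempotent components agree with $g^{(1)}(x)$ and $g^{(2)}(x)$ respectively, using the identification of $Y$-valued points through the two projection maps $p_1,p_2$. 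One then checks $g \in B_{\mathbb{D}}(X,Y)$, which is immediate since each component has $\rho_p$-bounded range. Finally, $\sigma_{\mathbb{D}}(f_n,g) = \sigma_1(f_n,g^{(1)})\mathbf{e}_1 + \sigma_2(f_n,g^{(2)})\mathbf{e}_2 \to 0$ because each real component tends to $0$, and by Lemma~1 (the characterization of $\mathbb{D}$-convergence via $d_{\mathbb{D}}(x_n,\alpha) \to 0$) this says $f_n \to g$ in $\left(B_{\mathbb{D}}(X,Y),\sigma_{\mathbb{D}}\right)$.

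The only genuinely delicate point, and the one I would be most careful about, is the bookkeeping in reconstructing the limit function $g$ and verifying it is well-defined as a map into $Y$ rather than into $Y \times Y$. The decomposition $\rho_{\mathbb{D}} = \rho_1 \mathbf{e}_1 + \rho_2 \mathbf{e}_2$ treats the two "copies" of $Y$ as the same underlying set $Y$ with two possibly different real metrics, so when one says $f_n \to g^{(1)}$ in $\sigma_1$ and $f_n \to g^{(2)}$ in $\sigma_2$ one must argue that the pointwise limits are compatible enough to define a single function; in fact, since the underlying set is literally $Y$ in both cases and pointwise limits of $\{f_n(x)\}$ are determined coordinatewise in the idempotent representation, $g$ is well-defined and $g = g^{(1)} = g^{(2)}$ as set maps. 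After that observation the rest is routine, essentially a transcription of the real-variable proof (for which one may cite, as the paper does elsewhere, [\cite{kh}] or [\cite{lin}]) through the componentwise dictionary. I would therefore present the argument compactly: decompose, invoke Remark~\ref{R2.3.1} to pass to the two real sup-metric spaces, apply the known real completeness result, recombine, and conclude with Lemma~1.
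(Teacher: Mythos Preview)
Your decomposition strategy has a genuine gap. You claim that Remark~\ref{R2.3.1} shows $(Y,\rho_1)$ and $(Y,\rho_2)$ are each complete real metric spaces, but that remark only characterizes $\mathbb{D}$-Cauchy sequences as those that are \emph{simultaneously} Cauchy in both $\rho_1$ and $\rho_2$; it says nothing about sequences that are Cauchy in only one of the two. Hence $\mathbb{D}$-completeness of $(Y,\rho_{\mathbb{D}})$ does not force either component space to be complete. Concretely, take $Y=(0,1)$, let $\rho_1(x,y)=|x-y|$ be the usual (incomplete) metric, and let $\rho_2$ be the discrete metric. A sequence is $\mathbb{D}$-Cauchy iff it is Cauchy in both, i.e.\ eventually constant, and such sequences trivially converge; so $(Y,\rho_{\mathbb{D}})$ is $\mathbb{D}$-complete while $(Y,\rho_1)$ is not. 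Your appeal to ``the classical theorem on uniform convergence'' applied separately to each $(Y,\rho_p)$ therefore fails at exactly the step where you invoke completeness of $(Y,\rho_p)$.

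The paper avoids this entirely: its proof is simply a reference to the classical argument in \cite{lin}, Theorem~4.5.3, carried out directly in the $\mathbb{D}$-metric. That is, for each $x\in X$ one observes $\rho_{\mathbb{D}}(f_n(x),f_m(x))\preceq\sigma_{\mathbb{D}}(f_n,f_m)$, so $\{f_n(x)\}$ is $\mathbb{D}$-Cauchy in $Y$ and, by $\mathbb{D}$-completeness of $Y$ itself (not of the components), converges to some $g(x)$; one then checks $g\in B_{\mathbb{D}}(X,Y)$ and $\sigma_{\mathbb{D}}(f_n,g)\to 0$ exactly as in the real case. Your approach is easily repaired along these lines: obtain the pointwise limit $g(x)$ from the $\mathbb{D}$-completeness of $Y$ directly, and only afterwards, if you wish, decompose $\sigma_{\mathbb{D}}$ to verify the final convergence. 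This also dissolves your ``bookkeeping'' worry about reconciling $g^{(1)}$ with $g^{(2)}$ (which, incidentally, cannot be resolved by an ``idempotent representation'' of points of $Y$, since $Y$ is an abstract set with no such structure), because there is then only one limit function from the outset.
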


\begin{proof}
Proof is along the similar lines as in [ \cite{lin}, Theorem $4.5.3$]
\end{proof}

Now we shall turn to the spaces of $\mathbb{D}$-continuous functions.

As before we assume that $(X,d_{\mathbb{D}})$ and $(Y,\rho _{\mathbb{D}})$
be two $\mathbb{D}$-metric spaces. We define 
\begin{equation*}
Cb_{\mathbb{D}}(X,Y)=\{f:X\rightarrow Y~|~f\text{ is }\mathbb{D}\text{%
-continuous and }\mathbb{D}\text{-bounded}\}
\end{equation*}%
to be the collection of all $\mathbb{D}$-bounded $\mathbb{D}$-continuous
functions from $X$ to $Y.$

Since $Cb_{\mathbb{D}}(X,Y)$ is a subset of $B_{\mathbb{D}}(X,Y),$ the metric%
\begin{equation*}
\sigma _{\mathbb{D}}(f,g)=\sup \{\rho _{\mathbb{D}}(f(x),g(x))~|~x\in X\}
\end{equation*}%
that we introduced on $B_{\mathbb{D}}(X,Y)$ is also a $\mathbb{D}$-metric on 
$Cb_{\mathbb{D}}(X,Y).$

\begin{proposition}
$Cb_{\mathbb{D}}(X,Y)$ is a $\mathbb{D}$-closed subset of $B_{\mathbb{D}%
}(X,Y).$

\begin{proof}
It suffices to show that if $\{f_{n}\}$ is a sequence in $Cb_{\mathbb{D}%
}(X,Y)$ that $\mathbb{D}$-converges to an element $f\in B(X,Y),$ then $f\in
Cb_{\mathbb{D}}(X,Y).$

By the definition of the $\mathbb{D}$-metric $\sigma _{\mathbb{D}},$ it is
clear that $\{f_{n}\}$ $\mathbb{D}$-converges uniformly to $f.$ Again since $%
f_{n}$ is $\mathbb{D}$-bounded $\mathbb{D}$-continuous for all $n\in 
\mathbb{N}
,$ it can be shown that $f$ is also $\mathbb{D}$-bounded $\mathbb{D}$%
-continuous and hence $f\in Cb_{\mathbb{D}}(X,Y).$
\end{proof}
\end{proposition}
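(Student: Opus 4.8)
The plan is to verify the two defining conditions for the limit function $f$ to belong to $Cb_{\mathbb{D}}(X,Y)$. The reduction at the start of the proof already places $f$ in $B_{\mathbb{D}}(X,Y)$, so $f$ is automatically $\mathbb{D}$-bounded and the only substantive point is $\mathbb{D}$-continuity of $f$ at an arbitrary $\alpha\in X$. First I would record what $\sigma_{\mathbb{D}}$-convergence means: by the Lemma characterizing $\mathbb{D}$-convergence via $\sigma_{\mathbb{D}}(f_n,f)\to 0$ together with the definition of $\sigma_{\mathbb{D}}$, the statement $\{f_n\}\to f$ in $\sigma_{\mathbb{D}}$ is precisely $\sup_{x\in X}\rho_{\mathbb{D}}(f_n(x),f(x))\to 0$, i.e. $\mathbb{D}$-uniform convergence.

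Then I would run the standard $\varepsilon/3$ argument, carried out directly in $\mathbb{D}$. Fix $\alpha\in X$ and $\epsilon_{\mathbb{D}}=\epsilon_1\mathbf{e}_1+\epsilon_2\mathbf{e}_2\in\mathbb{D}^{+}$; note that $\tfrac13\epsilon_{\mathbb{D}}=\tfrac{\epsilon_1}{3}\mathbf{e}_1+\tfrac{\epsilon_2}{3}\mathbf{e}_2\in\mathbb{D}^{+}$. By $\mathbb{D}$-uniform convergence choose $N\in\mathbb{N}$ with $\sigma_{\mathbb{D}}(f_N,f)\prec\tfrac13\epsilon_{\mathbb{D}}$, so that $\rho_{\mathbb{D}}(f_N(x),f(x))\preceq\sigma_{\mathbb{D}}(f_N,f)\prec\tfrac13\epsilon_{\mathbb{D}}$ for every $x\in X$. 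Since $f_N\in Cb_{\mathbb{D}}(X,Y)$ is $\mathbb{D}$-continuous at $\alpha$, choose $\delta_{\mathbb{D}}\in\mathbb{D}^{+}$ such that $d_{\mathbb{D}}(x,\alpha)\prec\delta_{\mathbb{D}}$ implies $\rho_{\mathbb{D}}(f_N(x),f_N(\alpha))\prec\tfrac13\epsilon_{\mathbb{D}}$.

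Two applications of the $\mathbb{D}$-triangle inequality (property $(iii)$ of a $\mathbb{D}$-metric for $\rho_{\mathbb{D}}$) then yield, for every $x$ with $d_{\mathbb{D}}(x,\alpha)\prec\delta_{\mathbb{D}}$,
\begin{equation*}
\rho_{\mathbb{D}}(f(x),f(\alpha))\preceq\rho_{\mathbb{D}}(f(x),f_N(x))+\rho_{\mathbb{D}}(f_N(x),f_N(\alpha))+\rho_{\mathbb{D}}(f_N(\alpha),f(\alpha))\prec\epsilon_{\mathbb{D}}.
\end{equation*}
Hence $f$ is $\mathbb{D}$-continuous at $\alpha$, and since $\alpha$ was arbitrary and $f$ is $\mathbb{D}$-bounded, $f\in Cb_{\mathbb{D}}(X,Y)$, which is exactly what the reduction demanded; thus $Cb_{\mathbb{D}}(X,Y)$ is $\mathbb{D}$-closed in $B_{\mathbb{D}}(X,Y)$.

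The only obstacle — really just bookkeeping — is making the manipulations with $\preceq$ and $\prec$ rigorous: that $\tfrac13\epsilon_{\mathbb{D}}$ stays in $\mathbb{D}^{+}$, that $\rho_{\mathbb{D}}(f_N(x),f(x))\preceq\sigma_{\mathbb{D}}(f_N,f)$ follows from the definition of the supremum in $\mathbb{D}$, and that three summands each $\prec\tfrac13\epsilon_{\mathbb{D}}$ (one of them only bounded with $\preceq$) sum to something $\prec\epsilon_{\mathbb{D}}$; all of this is transparent in the idempotent representation, where $\preceq,\prec$ are the coordinatewise orders on $\mathbb{R}^2$ and $a\preceq b\prec c$ forces $a\prec c$. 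An equivalent and conceptually cleaner route is to decompose $\rho_{\mathbb{D}}=\rho_1\mathbf{e}_1+\rho_2\mathbf{e}_2$, $\sigma_{\mathbb{D}}=\sigma_1\mathbf{e}_1+\sigma_2\mathbf{e}_2$: in the spirit of Remark~\ref{R2.3.1}, $\sigma_{\mathbb{D}}$-convergence of $\{f_n\}$ to $f$ is uniform convergence of each real component, each $f_n$ being $\mathbb{D}$-continuous means its components are continuous for the real metrics $d_i,\rho_i$, a uniform limit of continuous real functions is continuous, and recombining gives $\mathbb{D}$-continuity of $f$.
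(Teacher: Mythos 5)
Your proof is correct and follows the same route as the paper — uniform convergence (in the $\sigma_{\mathbb{D}}$ sense) of $\mathbb{D}$-continuous functions preserves $\mathbb{D}$-continuity, with $\mathbb{D}$-boundedness of $f$ coming for free from $f\in B_{\mathbb{D}}(X,Y)$. You actually supply the $\epsilon_{\mathbb{D}}/3$ details (and the componentwise justification of the order manipulations) that the paper's proof leaves as ``it can be shown,'' so your version is a faithful completion of the paper's sketch rather than a different argument.
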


\begin{theorem}
Let$\ (X,d_{\mathbb{D}})$ and $(Y,\rho _{\mathbb{D}})$ be two $\mathbb{D}$%
-metric spaces and assume that $(Y,\rho _{\mathbb{D}})$ is $\mathbb{D}$%
-complete. Then $(Cb_{\mathbb{D}}(X,Y),\sigma _{\mathbb{D}})$ is also $%
\mathbb{D}$-complete.

\begin{proof}
Recall from Proposition \ref{p cplt} that a $\mathbb{D}$-closed subspace of
a $\mathbb{D}$-complete space is itself $\mathbb{D}$-complete. Since $B_{%
\mathbb{D}}(X,Y)$ is $\mathbb{D}$-complete by Theorem \ref{t comp}, and $Cb_{%
\mathbb{D}}(X,Y)$ is $\mathbb{D}$-closed subset of $B_{\mathbb{D}}(X,Y)$ by
the above proposition, it follows that $Cb_{\mathbb{D}}(X,Y)$ is $\mathbb{D}$%
-complete.
\end{proof}
\end{theorem}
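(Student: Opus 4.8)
The plan is to avoid any direct $\epsilon$-argument and instead assemble three facts already established earlier in the paper. The target is to show that every $\sigma_{\mathbb{D}}$-Cauchy sequence in $Cb_{\mathbb{D}}(X,Y)$ is $\mathbb{D}$-convergent to a point of $Cb_{\mathbb{D}}(X,Y)$, and the quickest route is through closedness inside a larger $\mathbb{D}$-complete space rather than building the limit from scratch.

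First I would invoke Theorem \ref{t comp}: since $(Y,\rho_{\mathbb{D}})$ is assumed $\mathbb{D}$-complete, the space $(B_{\mathbb{D}}(X,Y),\sigma_{\mathbb{D}})$ of all $\mathbb{D}$-bounded functions $X\rightarrow Y$ is $\mathbb{D}$-complete. Next I would recall the immediately preceding Proposition, which shows $Cb_{\mathbb{D}}(X,Y)$ is a $\mathbb{D}$-closed subset of $B_{\mathbb{D}}(X,Y)$; here one should note, as the text does, that $Cb_{\mathbb{D}}(X,Y)\subseteq B_{\mathbb{D}}(X,Y)$ carries the restriction of the very same $\mathbb{D}$-metric $\sigma_{\mathbb{D}}$, so the subspace notion of $\sigma_{\mathbb{D}}$-convergence coincides with the ambient one. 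Finally I would apply Proposition \ref{p cplt}: a subset of a $\mathbb{D}$-complete metric space is $\mathbb{D}$-complete in the subspace $\mathbb{D}$-metric if and only if it is $\mathbb{D}$-closed. Taking the subset to be $Cb_{\mathbb{D}}(X,Y)$ and the ambient space to be $B_{\mathbb{D}}(X,Y)$, closedness yields completeness, which is exactly the assertion.

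I do not expect a genuine obstacle, since all the work has been front-loaded into Theorem \ref{t comp} and the preceding Proposition; the present theorem is essentially a bookkeeping corollary. If instead one wanted a self-contained argument, the only nontrivial step would be the classical fact that a uniform limit of continuous functions is continuous, which here would be carried out componentwise via the idempotent decomposition $\sigma_{\mathbb{D}}=\sigma_1\mathbf{e}_1+\sigma_2\mathbf{e}_2$ together with Remark \ref{R2.3.1}: a $\sigma_{\mathbb{D}}$-Cauchy sequence $\{f_n\}$ splits into two real uniformly Cauchy sequences of continuous functions, each converging uniformly to a continuous real limit, after which one reassembles and checks that the $\mathbb{D}$-valued limit is $\mathbb{D}$-bounded and $\mathbb{D}$-continuous. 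But the three-line assembly above is cleaner and is the route I would take.
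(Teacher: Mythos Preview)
Your proposal is correct and follows exactly the paper's own argument: invoke Theorem \ref{t comp} for the $\mathbb{D}$-completeness of $B_{\mathbb{D}}(X,Y)$, the immediately preceding Proposition for $Cb_{\mathbb{D}}(X,Y)$ being $\mathbb{D}$-closed in $B_{\mathbb{D}}(X,Y)$, and then Proposition \ref{p cplt} to conclude. The additional remarks about a self-contained componentwise argument are fine but unnecessary here.
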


\begin{proposition}
Let$\ (X,d_{\mathbb{D}})$ and $(Y,\rho _{\mathbb{D}})$ be two $\mathbb{D}$%
-metric spaces and assume that $X$ is $\mathbb{D}$-compact. Then all $%
\mathbb{D}$-continuous functions from $X$ to $Y$ are $\mathbb{D}$-bounded.

\begin{proof}
Assume that $f:X\longrightarrow Y$ is $\mathbb{D}$-continuous, and pick a
point $a\in X.$ It suffices to prove that the function 
\begin{equation*}
h(x)=\rho _{\mathbb{D}}(f(x),f(a))
\end{equation*}%
is $\mathbb{D}$-bounded, and this will follow from the Extreme Value Theorem %
\ref{t evt} if we can show that it is $\mathbb{D}$-continuous.

Since%
\begin{equation*}
\delta _{\mathbb{D}}\left( h(x),h(y)\right) =\delta _{\mathbb{D}}\left( \rho
_{\mathbb{D}}(f(x),f(a)),\rho _{\mathbb{D}}(f(y),f(a))\right) \preceq \rho _{%
\mathbb{D}}(f(x),f(y)),
\end{equation*}%
$($where $\delta _{\mathbb{D}}$ is the $\mathbb{D}$-metric on $\mathbb{D)}$

And since $f$ is $\mathbb{D}$-continuous, so is $h$.
\end{proof}
\end{proposition}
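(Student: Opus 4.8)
The plan is to imitate the standard reduction for real-valued continuous functions on compact spaces: fix a basepoint, replace $f$ by the auxiliary map measuring the $\rho_{\mathbb{D}}$-distance of $f(x)$ from $f$ of that basepoint, and then invoke the Extreme Value Theorem (Theorem \ref{t evt}). Concretely, fix $a\in X$ and define $h:X\to\mathbb{D}$ by $h(x)=\rho_{\mathbb{D}}(f(x),f(a))$, which takes values in $\mathbb{D}_0^+$. First I would observe that it suffices to prove $h$ is $\mathbb{D}$-bounded: for arbitrary $u,v\in X$ the triangle inequality for $\rho_{\mathbb{D}}$ gives $\rho_{\mathbb{D}}(f(u),f(v))\preceq\rho_{\mathbb{D}}(f(u),f(a))+\rho_{\mathbb{D}}(f(a),f(v))=h(u)+h(v)$, so if $h(x)\preceq M$ for all $x$ with $M\in\mathbb{D}$, then $h(u)+h(v)\preceq 2M$ and $f(X)$ is $\mathbb{D}$-bounded (one may enlarge $2M$ by $\mathbf{e}_1+\mathbf{e}_2$ if a bound in $\mathbb{D}^+$ is wanted).

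Next I would show $h$ is $\mathbb{D}$-continuous; then, since $X$ is $\mathbb{D}$-compact, Theorem \ref{t evt} applies to $h:X\to\mathbb{D}$, and in particular it tells us $\sup h(X)=M_1\mathbf{e}_1+M_2\mathbf{e}_2$ is a genuine hyperbolic number, whence $h(x)\preceq M_1\mathbf{e}_1+M_2\mathbf{e}_2$ for all $x\in X$ and $h$ is $\mathbb{D}$-bounded. For the continuity of $h$, equip $\mathbb{D}$ with the $\mathbb{D}$-metric $\delta_{\mathbb{D}}(\alpha,\beta)=\left\vert\alpha-\beta\right\vert_k$, whose idempotent components are the real metrics $\delta_i(\alpha,\beta)=|\alpha_i-\beta_i|$, $i=1,2$. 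The key estimate is a hyperbolic reverse triangle inequality,
\[
\delta_{\mathbb{D}}\big(h(x),h(y)\big)=\delta_{\mathbb{D}}\big(\rho_{\mathbb{D}}(f(x),f(a)),\,\rho_{\mathbb{D}}(f(y),f(a))\big)\preceq\rho_{\mathbb{D}}(f(x),f(y)),
\]
from which $\mathbb{D}$-continuity of $h$ at any $\alpha\in X$ is immediate: given $\epsilon_{\mathbb{D}}\in\mathbb{D}^+$, pick $\delta_{\mathbb{D}}'\in\mathbb{D}^+$ with $\rho_{\mathbb{D}}(f(x),f(\alpha))\prec\epsilon_{\mathbb{D}}$ whenever $d_{\mathbb{D}}(x,\alpha)\prec\delta_{\mathbb{D}}'$, and combine this with the displayed estimate to get $\delta_{\mathbb{D}}(h(x),h(\alpha))\prec\epsilon_{\mathbb{D}}$.

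I expect the reverse triangle inequality in the last display to be the one point needing genuine care, since $\left\vert\cdot\right\vert_k$ of a hyperbolic number does not manipulate like a real absolute value. The safe route is to pass to idempotent components: write $\rho_{\mathbb{D}}=\rho_1\mathbf{e}_1+\rho_2\mathbf{e}_2$ with $\rho_1,\rho_2$ real metrics, apply the ordinary real reverse triangle inequality $|\rho_i(f(x),f(a))-\rho_i(f(y),f(a))|\le\rho_i(f(x),f(y))$ for $i=1,2$, and reassemble using $\gamma\preceq\eta\iff\gamma_1\le\eta_1$ and $\gamma_2\le\eta_2$. A secondary remark worth recording is that invoking Theorem \ref{t evt} for $h$ already presupposes $\sup h(X)\in\mathbb{D}$; this is legitimate because $\mathbb{D}$-compactness of $X$ forces $(X,d_1)$ and $(X,d_2)$ to be compact real metric spaces, so each continuous real function $h_i=\rho_i(f(\cdot),f(a))$ is bounded above and the two suprema $M_1,M_2$ are finite.
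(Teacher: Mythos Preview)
Your proposal is correct and follows essentially the same approach as the paper: fix a basepoint $a$, reduce to showing $h(x)=\rho_{\mathbb{D}}(f(x),f(a))$ is $\mathbb{D}$-bounded via the Extreme Value Theorem, and establish $\mathbb{D}$-continuity of $h$ through the hyperbolic reverse triangle inequality $\delta_{\mathbb{D}}(h(x),h(y))\preceq\rho_{\mathbb{D}}(f(x),f(y))$. You supply more detail than the paper does---the explicit passage to idempotent components for the reverse triangle inequality, the triangle-inequality step linking boundedness of $h$ to $\mathbb{D}$-boundedness of $f(X)$, and the justification that $\sup h(X)$ is a genuine hyperbolic number---but the strategy is identical.
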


Now if we define%
\begin{equation*}
C_{\mathbb{D}}(X,Y)=\{f:X\rightarrow Y~|~f\text{ is }\mathbb{D}-\text{%
continuous}\}
\end{equation*}%
where $X$ is $\mathbb{D}$-compact, the above proposition tells us that $Cb_{%
\mathbb{D}}(X,Y)~$and\ $C_{\mathbb{D}}(X,Y)$ coincide. The following theorem
sums up the results above for $X$ $\mathbb{D}$-compact.

\begin{theorem}
Let$\ (X,d_{\mathbb{D}})$ and $(Y,\rho _{\mathbb{D}})$ be two $\mathbb{D}$%
-metric spaces and assume that $X$ is $\mathbb{D}$-compact, then%
\begin{equation*}
\sigma _{\mathbb{D}}(f,g)=\sup \{\rho _{\mathbb{D}}(f(x),g(x))~|~x\in X\}
\end{equation*}%
defines a $\mathbb{D}$-metric $\sigma _{\mathbb{D}}$ on $C_{\mathbb{D}%
}(X,Y).~$Also if $(Y,\rho _{\mathbb{D}})$ is $\mathbb{D}$-complete, so is $%
\left( C_{\mathbb{D}}(X,Y),\sigma _{\mathbb{D}}\right) .$
\end{theorem}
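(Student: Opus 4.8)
The plan is to reduce everything to results already established in the paper. First I would observe that since $X$ is $\mathbb{D}$-compact, the preceding proposition shows every $\mathbb{D}$-continuous function $f:X\rightarrow Y$ is automatically $\mathbb{D}$-bounded, so $C_{\mathbb{D}}(X,Y)=Cb_{\mathbb{D}}(X,Y)$ as sets. Consequently the formula $\sigma_{\mathbb{D}}(f,g)=\sup\{\rho_{\mathbb{D}}(f(x),g(x)):x\in X\}$ is well defined on $C_{\mathbb{D}}(X,Y)$: for each pair $f,g$ the set $\{\rho_{\mathbb{D}}(f(x),g(x)):x\in X\}$ is $\mathbb{D}$-bounded above (by $\mathbb{D}$-boundedness of $f$ and $g$ together with the triangle inequality for $\rho_{\mathbb{D}}$), hence its supremum, computed componentwise via $A_{\mathbf{e}_1}$ and $A_{\mathbf{e}_2}$ as in the preliminaries, exists in $\mathbb{D}_0^+$.

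Next I would invoke the earlier Proposition establishing that $\sigma_{\mathbb{D}}$ is a $\mathbb{D}$-metric on $B_{\mathbb{D}}(X,Y)$. Since $C_{\mathbb{D}}(X,Y)=Cb_{\mathbb{D}}(X,Y)\subseteq B_{\mathbb{D}}(X,Y)$, the restriction of $\sigma_{\mathbb{D}}$ to $C_{\mathbb{D}}(X,Y)\times C_{\mathbb{D}}(X,Y)$ inherits the three defining axioms of Definition: positivity with equality iff $f=g$, symmetry, and the triangle inequality. Nothing new needs to be checked here beyond noting that the restriction of a $\mathbb{D}$-metric to a subset is again a $\mathbb{D}$-metric, which is immediate.

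For the completeness assertion, assume $(Y,\rho_{\mathbb{D}})$ is $\mathbb{D}$-complete. By Theorem \ref{t comp}, $(B_{\mathbb{D}}(X,Y),\sigma_{\mathbb{D}})$ is $\mathbb{D}$-complete, and by the earlier Proposition $Cb_{\mathbb{D}}(X,Y)$ is a $\mathbb{D}$-closed subset of $B_{\mathbb{D}}(X,Y)$. Using the identification $C_{\mathbb{D}}(X,Y)=Cb_{\mathbb{D}}(X,Y)$ from the first step, $C_{\mathbb{D}}(X,Y)$ is a $\mathbb{D}$-closed subset of the $\mathbb{D}$-complete space $B_{\mathbb{D}}(X,Y)$, so by Proposition \ref{p cplt} it is itself $\mathbb{D}$-complete. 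I do not anticipate a genuine obstacle: the only point requiring a little care is justifying that $\sup$ is finite (i.e. lies in $\mathbb{D}_0^+$ rather than requiring the symbol $\infty_{\mathbb{D}}$), and this is exactly where the hypothesis that $X$ is $\mathbb{D}$-compact — funnelled through the previous proposition and the Extreme Value Theorem \ref{t evt} — is used. Everything else is a bookkeeping assembly of Theorem \ref{t comp}, the $\mathbb{D}$-closedness proposition, and Proposition \ref{p cplt}.
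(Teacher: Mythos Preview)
Your proposal is correct and is exactly the assembly the paper intends: the theorem is presented there explicitly as a summary of the preceding results, with $C_{\mathbb{D}}(X,Y)=Cb_{\mathbb{D}}(X,Y)$ by the proposition on $\mathbb{D}$-compact domains, $\sigma_{\mathbb{D}}$ a $\mathbb{D}$-metric by the earlier proposition on $B_{\mathbb{D}}(X,Y)$, and completeness via Theorem \ref{t comp} together with the $\mathbb{D}$-closedness proposition and Proposition \ref{p cplt}. No separate proof is given in the paper, and your write-up simply makes explicit what the paper leaves implicit.
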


\section{Examples}

In this section we give some examples supporting our results.

\begin{example}
\label{Example 1} Let $X=\mathbb{D}~\,$and $d_{\mathbb{D}}:\mathbb{D}\times 
\mathbb{D}\longrightarrow \mathbb{D}_{0}^{+}$ be given by 
\begin{equation*}
d_{\mathbb{D}}(\alpha ,\beta )=\left\vert \alpha _{1}-\beta _{1}\right\vert 
\mathbf{e}_{1}+\left\vert \alpha _{2}-\beta _{2}\right\vert \mathbf{e}_{2},
\end{equation*}%
where $\alpha =\alpha _{1}\mathbf{e}_{1}+\alpha _{2}\mathbf{e}_{2},~\beta
=\beta _{1}\mathbf{e}_{1}+\beta _{2}\mathbf{e}_{2}\in \mathbb{D}$ and $%
\left\vert .\right\vert $ is the real modulus.

Then clearly $\left\vert \alpha _{1}-\beta _{1}\right\vert ,\left\vert
\alpha _{2}-\beta _{2}\right\vert \geq 0$ and thus $d_{\mathbb{D}}(\alpha
,\beta )\succeq 0\,,\ \forall \alpha ,\beta \in \mathbb{D}.$

Also 
\begin{equation*}
d_{\mathbb{D}}(\alpha ,\beta )=0\Longleftrightarrow \left\vert \alpha
_{1}-\beta _{1}\right\vert =0=\left\vert \alpha _{2}-\beta _{2}\right\vert
\Longleftrightarrow \alpha _{1}=\beta _{1}\text{ and }\alpha _{2}=\beta
_{2}\Longleftrightarrow \alpha =\beta .
\end{equation*}

Again, it is clear from the definition 
\begin{equation*}
d_{\mathbb{D}}(\alpha ,\beta )=d_{\mathbb{D}}(\beta ,\alpha ),\ \forall
\alpha ,\beta \in \mathbb{D}.
\end{equation*}

Next let $\alpha =\alpha _{1}\mathbf{e}_{1}+\alpha _{2}\mathbf{e}_{2},~\beta
=\beta _{1}\mathbf{e}_{1}+\beta _{2}\mathbf{e}_{2},\gamma =\gamma _{1}%
\mathbf{e}_{1}+\gamma _{2}\mathbf{e}_{2}\in \mathbb{D}$.

Then, 
\begin{eqnarray*}
d_{\mathbb{D}}(\alpha ,\beta ) &=&\left\vert \alpha _{1}-\beta
_{1}\right\vert \mathbf{e}_{1}+\left\vert \alpha _{2}-\beta _{2}\right\vert 
\mathbf{e}_{2} \\
&=&\left\vert (\alpha _{1}-\gamma _{1})+(\gamma _{1}-\beta _{1})\right\vert 
\mathbf{e}_{1}+\left\vert (\alpha _{2}-\gamma _{2})+(\gamma _{2}-\beta
_{2})\right\vert \mathbf{e}_{2} \\
&\preceq &\left\vert \alpha _{1}-\gamma _{1}\right\vert \mathbf{e}%
_{1}+\left\vert \gamma _{1}-\beta _{1}\right\vert \mathbf{e}_{1}+\left\vert
\alpha _{2}-\gamma _{2}\right\vert \mathbf{e}_{2}+\left\vert \gamma
_{2}-\beta _{2}\right\vert \mathbf{e}_{2} \\
&=&\left\vert \alpha _{1}-\gamma _{1}\right\vert \mathbf{e}_{1}+\left\vert
\alpha _{2}-\gamma _{2}\right\vert \mathbf{e}_{2}+\left\vert \gamma
_{1}-\beta _{1}\right\vert \mathbf{e}_{1}+\left\vert \alpha _{2}-\gamma
_{2}\right\vert \mathbf{e}_{2}+\left\vert \gamma _{2}-\beta _{2}\right\vert 
\mathbf{e}_{2} \\
&=&d_{\mathbb{D}}(\alpha ,\gamma )+d_{\mathbb{D}}(\gamma ,\alpha ).
\end{eqnarray*}

Thus $d_{\mathbb{D}}(\alpha ,\beta )=\left\vert \alpha _{1}-\beta
_{1}\right\vert \mathbf{e}_{1}+\left\vert \alpha _{2}-\beta _{2}\right\vert 
\mathbf{e}_{2}$ is a hyperbolic valued metric on $\mathbb{D}$.

Further $\left( \mathbb{D},d_{\mathbb{D}}\right) $ ia $\mathbb{D}$-complete
and every $\mathbb{D}$-closed and $\mathbb{D}$-bounded subset of $\mathbb{D}$
is $\mathbb{D}$-compact.

Now let $\alpha =\alpha _{1}\mathbf{e}_{1}+\alpha _{2}\mathbf{e}_{2}\in 
\mathbb{D}$ and$~r=r_{1}\mathbf{e}_{1}+r_{2}\mathbf{e}_{2}\in \mathbb{D}^{+}$%
. Then%
\begin{eqnarray*}
B_{\mathbb{D}}(\alpha ;r) &=&\{x=x_{1}\mathbf{e}_{1}+x_{2}\mathbf{e}_{2}\in 
\mathbb{D}:\left\vert \alpha _{1}-x_{1}\right\vert \mathbf{e}_{1}+\left\vert
\alpha _{2}-x_{2}\right\vert \mathbf{e}_{2}\prec r_{1}\mathbf{e}_{1}+r_{2}%
\mathbf{e}_{2}\} \\
&=&\{x=x_{1}\mathbf{e}_{1}+x_{2}\mathbf{e}_{2}\in \mathbb{D}:\left\vert
\alpha _{1}-x_{1}\right\vert <r_{1},\left\vert \alpha _{2}-x_{2}\right\vert
<r_{2}\},
\end{eqnarray*}%
which is a open square with vertices at $\left( \alpha _{1}+r_{1}\right) 
\mathbf{e}_{1}+\left( \alpha _{2}+r_{2}\right) \mathbf{e}_{2},$ $\left(
\alpha _{1}+r_{1}\right) \mathbf{e}_{1}+\left( \alpha _{2}-r_{2}\right) 
\mathbf{e}_{2},$ $\left( \alpha _{1}-r_{1}\right) \mathbf{e}_{1}+\left(
\alpha _{2}+r_{2}\right) \mathbf{e}_{2}$ and $\left( \alpha
_{1}-r_{1}\right) \mathbf{e}_{1}+\left( \alpha _{2}-r_{2}\right) \mathbf{e}%
_{2};$%
\begin{eqnarray*}
\overline{B_{\mathbb{D}}}(\alpha ;r) &=&\{x=x_{1}\mathbf{e}_{1}+x_{2}\mathbf{%
e}_{2}\in \mathbb{D}:\left\vert \alpha _{1}-x_{1}\right\vert \mathbf{e}%
_{1}+\left\vert \alpha _{2}-x_{2}\right\vert \mathbf{e}_{2}\preceq r_{1}%
\mathbf{e}_{1}+r_{2}\mathbf{e}_{2}\} \\
&=&\{x=x_{1}\mathbf{e}_{1}+x_{2}\mathbf{e}_{2}\in \mathbb{D}:\left\vert
\alpha _{1}-x_{1}\right\vert \leq r_{1},\left\vert \alpha
_{2}-x_{2}\right\vert \leq r_{2}\},
\end{eqnarray*}%
which is a closed square with vertices at $\left( \alpha _{1}+r_{1}\right) 
\mathbf{e}_{1}+\left( \alpha _{2}+r_{2}\right) \mathbf{e}_{2},$ $\left(
\alpha _{1}+r_{1}\right) \mathbf{e}_{1}+\left( \alpha _{2}-r_{2}\right) 
\mathbf{e}_{2},$ $\left( \alpha _{1}-r_{1}\right) \mathbf{e}_{1}+\left(
\alpha _{2}+r_{2}\right) \mathbf{e}_{2}$ and $\left( \alpha
_{1}-r_{1}\right) \mathbf{e}_{1}+\left( \alpha _{2}-r_{2}\right) \mathbf{e}%
_{2};$ and 
\begin{align*}
S_{\mathbb{D}}(\alpha ;r)& =\{x=x_{1}\mathbf{e}_{1}+x_{2}\mathbf{e}_{2}\in 
\mathbb{D}:\left\vert \alpha _{1}-x_{1}\right\vert \mathbf{e}_{1}+\left\vert
\alpha _{2}-x_{2}\right\vert \mathbf{e}_{2}=r_{1}\mathbf{e}_{1}+r_{2}\mathbf{%
e}_{2}\} \\
& =\{x=x_{1}\mathbf{e}_{1}+x_{2}\mathbf{e}_{2}\in \mathbb{D}:\left\vert
\alpha _{1}-x_{1}\right\vert =r_{1},\left\vert \alpha _{2}-x_{2}\right\vert
=r_{2}\}. \\
& =\{\left( \alpha _{1}+r_{1}\right) \mathbf{e}_{1}+\left( \alpha
_{2}+r_{2}\right) \mathbf{e}_{2},\left( \alpha _{1}+r_{1}\right) \mathbf{e}%
_{1}+\left( \alpha _{2}-r_{2}\right) \mathbf{e}_{2},\left( \alpha
_{1}-r_{1}\right) \mathbf{e}_{1} \\
& +\left( \alpha _{2}+r_{2}\right) \mathbf{e}_{2},\left( \alpha
_{1}-r_{1}\right) \mathbf{e}_{1}+\left( \alpha _{2}-r_{2}\right) \mathbf{e}%
_{2}\}
\end{align*}%
which consist of only four points.

So, in this situation, unlike $%
\mathbb{R}
^{2}$ equipped with usual metric,%
\begin{equation*}
\overline{B_{\mathbb{D}}(a;r)}-B_{\mathbb{D}}(\alpha ;r)\neq S_{\mathbb{D}%
}(\alpha ;r).
\end{equation*}
\end{example}

\begin{example}
Let $X=\mathbb{B%
\mathbb{C}
},\,$and $d_{\mathbb{D}}:\mathbb{B%
\mathbb{C}
}\times \mathbb{B%
\mathbb{C}
}\longrightarrow \mathbb{D}_{0}^{+}$ be given by $d_{\mathbb{D}}(\alpha
,\beta )=\left\vert \alpha -\beta \right\vert _{k},~$where $\alpha ,~\beta
\in \mathbb{B%
\mathbb{C}
~}$and $\left\vert .\right\vert _{k}$ is the hyperbolic valued modulus on
bicomplex number.

Here we shall show the triangle Inequality for $d_{\mathbb{D}}.$

Let $\alpha ,\beta ,\gamma \in \mathbb{B%
\mathbb{C}
}$ with$~$%
\begin{eqnarray*}
\alpha  &=&\alpha _{1}\mathbf{e}_{1}+\alpha _{2}\mathbf{e}_{2}=(\alpha
_{11}+i.\alpha _{12})\mathbf{e}_{1}+(\alpha _{21}+i.\alpha _{22})\mathbf{e}%
_{2}, \\
~\beta  &=&\beta _{1}\mathbf{e}_{1}+\beta _{2}\mathbf{e}_{2}=(\beta
_{11}+i.\beta _{12})\mathbf{e}_{1}+(\beta _{21}+i.\beta _{22})\mathbf{e}_{2},
\\
\gamma  &=&\gamma _{1}\mathbf{e}_{1}+\gamma _{2}\mathbf{e}_{2}=(\gamma
_{11}+i.\gamma _{12})\mathbf{e}_{1}+(\gamma _{21}+i.\gamma _{22})\mathbf{e}%
_{2}.
\end{eqnarray*}%
be their idempotent representation. Then%
\begin{align*}
d_{\mathbb{D}}(\alpha ,\gamma )+d_{\mathbb{D}}(\gamma ,\beta )& =\left\vert
\alpha -\gamma \right\vert _{k}+\left\vert \gamma -\beta \right\vert _{k} \\
& =\left\vert \alpha _{1}\mathbf{e}_{1}+\alpha _{2}\mathbf{e}_{2}-\gamma _{1}%
\mathbf{e}_{1}-\gamma _{2}\mathbf{e}_{2}\right\vert _{k}+\left\vert \gamma
_{1}\mathbf{e}_{1}+\gamma _{2}\mathbf{e}_{2}-\beta _{1}e_{1}-\beta _{2}%
\mathbf{e}_{2}\right\vert _{k} \\
& =\left\vert (\alpha _{1}-\gamma _{1})\mathbf{e}_{1}+(\alpha _{2}-\gamma
_{2})\mathbf{e}_{2}\right\vert _{k}+\left\vert (\gamma _{1}-\beta _{1})%
\mathbf{e}_{1}+(\gamma _{2}-\beta _{2})\mathbf{e}_{2}\right\vert _{k} \\
& =\left\vert \alpha _{1}-\gamma _{1}\right\vert \mathbf{e}_{1}+\left\vert
\alpha _{2}-\gamma _{2}\right\vert \mathbf{e}_{2}+\left\vert \gamma
_{1}-\beta _{1}\right\vert \mathbf{e}_{1}+\left\vert \gamma _{2}-\beta
_{2}\right\vert \mathbf{e}_{2},\text{ } \\
& \text{where }\left\vert .\right\vert \text{ is modulus over }\mathbb{%
\mathbb{C}
}(\mathbf{i}) \\
& =\left( \left\vert \alpha _{1}-\gamma _{1}\right\vert +\left\vert \gamma
_{1}-\beta _{1}\right\vert \right) \mathbf{e}_{1}+\left( \left\vert \alpha
_{2}-\gamma _{2}\right\vert +\left\vert \gamma _{2}-\beta _{2}\right\vert
\right) \mathbf{e}_{2} \\
& =\left( \sqrt{\left( \alpha _{11}-\gamma _{11}\right) ^{2}+\left( \alpha
_{12}-\gamma _{12}\right) ^{2}}+\sqrt{\left( \gamma _{11}-\beta _{11}\right)
^{2}+\left( \gamma _{12}-\beta _{12}\right) ^{2}}\right) \mathbf{e}_{1} \\
& +\left( \sqrt{\left( \alpha _{21}-\gamma _{21}\right) ^{2}+\left( \alpha
_{22}-\gamma _{22}\right) ^{2}}+\sqrt{\left( \gamma _{21}-\beta _{21}\right)
^{2}+\left( \gamma _{22}-\beta _{22}\right) ^{2}}\right) \mathbf{e}_{2} \\
& \preceq \left( \sqrt{\left( \alpha _{11}-\beta _{11}\right) ^{2}+\left(
\alpha _{12}-\beta _{12}\right) ^{2}}\right) \mathbf{e}_{1}+\left( \sqrt{%
\left( \alpha _{21}-\beta _{21}\right) ^{2}+\left( \alpha _{22}-\beta
_{22}\right) ^{2}}\right) \mathbf{e}_{2} \\
& =\left\vert \alpha _{1}-\beta _{1}\right\vert \mathbf{e}_{1}+\left\vert
\alpha _{2}-\beta _{2}\right\vert \mathbf{e}_{2} \\
& =\left\vert \alpha -\beta \right\vert _{k} \\
& =d_{\mathbb{D}}(\alpha ,\beta )
\end{align*}%
which shows the triangle inequality.

So $d_{\mathbb{D}}(\alpha ,\beta )=\left\vert \alpha -\beta \right\vert _{k}$
is a hyperbolic valued metric on $\mathbb{B%
\mathbb{C}
}.$
\end{example}

\begin{example}
Let $d_{1},~d_{2}:$ $\mathbb{%
\mathbb{C}
}\times \mathbb{%
\mathbb{C}
}\rightarrow \lbrack 0,\infty )~$be two real valued metrices on $\mathbb{%
\mathbb{C}
}.$ Take, $d_{\mathbb{D}}:\mathbb{B%
\mathbb{C}
}\times \mathbb{B%
\mathbb{C}
}\longrightarrow \mathbb{D}_{0}^{+}$ given by%
\begin{equation*}
d_{\mathbb{D}}(x,y)=d_{1}(x_{1},y_{1})\mathbf{e}_{1}+d_{2}(x_{2},y_{2})%
\mathbf{e}_{2},
\end{equation*}%
where $x=x_{1}\mathbf{e}_{1}+x_{2}\mathbf{e}_{2},~y=y_{1}\mathbf{e}_{1}+y_{2}%
\mathbf{e}_{2}\in \mathbb{D},~$then $d_{\mathbb{D}}$ is a hyperbolic valued
metric on $\mathbb{B%
\mathbb{C}
}$..
\end{example}

\end{document}